\renewcommand{\A}{\mathcal{A}}
\newcommand{\Af}{\A_f}
\newcommand{\Afl}{\Af^{\ell}}
\newcommand{\Afn}{\Af^{n}}
\newcommand{\U}{\mathcal{U}}
\newcommand{\Ua}{\U^{a}}
\renewcommand{\R}{\mathcal{R}}
\newcommand{\Rf}{\R_f}
\newcommand{\z}{\hat{z}}
\newcommand{\zetah}{\hat{\zeta}}
\newcommand{\fh}{\hat{f}}
\newcommand{\f}{\fh}
\newcommand{\Ah}{\hat{A}}
\renewcommand{\Z}{\mathcal{Z}}
\newcommand{\bdyit}[2]
\newcommand{\Crit}{\operatorname{Crit}}
\renewcommand{\P}{\mathcal{P}}
\newcommand{\Nf}{\mathcal{N}_f}
\newcommand{\lam}{\operatorname{lam}}
\newcommand{\psih}{\hat{\psi}}
\newcommand{\gammah}{\hat{\gamma}}
\renewcommand{\phi}{\varphi}
\newcommand{\univ}{\operatorname{Univ}}
\title[Hyperbolic Leaves and Wild 
         Laminations]{%
   A note on \\ 
     hyperbolic leaves and wild laminations \\ 
    of rational functions}
\author{Jeremy Kahn}
\address{Institute for Mathematical Sciences, SUNY Stony Brook, 
               NY 11794-3660, USA}
\email{kahn@math.sunysb.edu}
\author{Mikhail Lyubich}
\address{Institute for Mathematical Sciences, SUNY Stony Brook, 
               NY 11794-3660, USA}
\email{mlyubich@math.sunysb.edu}
\author{Lasse Rempe}
\address{Dept.\ of Math.\ Sciences, University of Liverpool, Liverpool L69 7ZL, UK}
\email{l.rempe@liverpool.ac.uk}
\thanks{ The second author has been partially supported by the
  NSF and NSERC.
The third author has been partially supported
 by a postdoctoral fellowship of the 
 German Academic Exchange Service (DAAD), and later by EPSRC
 fellowship EP/E052851/1. }
\begin{document}

\def\IMSmarkvadjust{0 pt}
\def\IMSmarkhadjust{0 pt}
\def\IMSmarkhpadding{0 pt}
\def\IMSpubltext{Published in modified form:}
\def\SBIMSMark#1#2#3{
 \font\SBF=cmss10 at 10 true pt
 \font\SBI=cmssi10 at 10 true pt
 \setbox0=\hbox{\SBF \hbox to \IMSmarkhpadding{\relax}
                Stony Brook IMS Preprint \##1}
 \setbox2=\hbox to \wd0{\hfil \SBI #2}
 \setbox4=\hbox to \wd0{\hfil \SBI #3}
 \setbox6=\hbox to \wd0{\hss
             \vbox{\hsize=\wd0 \parskip=0pt \baselineskip=10 true pt
                   \copy0 \break%
                   \copy2 \break% 
                   \copy4 \break}}
 \dimen0=\ht6   \advance\dimen0 by \vsize \advance\dimen0 by 8 true pt
                \advance\dimen0 by -\pagetotal
	        \advance\dimen0 by \IMSmarkvadjust
 \dimen2=\hsize \advance\dimen2 by .25 true in
	        \advance\dimen2 by \IMSmarkhadjust

%
%   Check for publication info
%
%  \newread\jref
  \openin2=publishd.tex
  \ifeof2\setbox0=\hbox to 0pt{}
  \else 
     \setbox0=\hbox to 3.1 true in{
                \vbox to \ht6{\hsize=3 true in \parskip=0pt  \noindent  
                {\SBI \IMSpubltext}\hfil\break
                \input publishd.tex 
                \vfill}}
  \fi
  \closein2
  \ht0=0pt \dp0=0pt
 \ht6=0pt \dp6=0pt
 \setbox8=\vbox to \dimen0{\vfill \hbox to \dimen2{\copy0 \hss \copy6}}
 \ht8=0pt \dp8=0pt \wd8=0pt
 \copy8
 \message{*** Stony Brook IMS Preprint #1, #2. #3 ***}
}

\SBIMSMark{2008/5}{October 2008}{}

\begin{abstract}
 We study the affine orbifold laminations that were 
  constructed in \cite{mishayair}. An important question left open in
  \cite{mishayair} is whether these laminations are always locally
  compact. We show that this is not the case.

 The counterexample we construct has the property that the
  \emph{regular leaf space} contains (many) hyperbolic leaves
  that intersect the Julia set; whether this can happen is itself
  a question raised in \cite{mishayair}.
\end{abstract}

 \maketitle

 \section{Introduction}
 Providing a new line in the ``Sullivan dictionary'' between
  rational maps and Kleinian groups,
  the article \cite{mishayair} associated an ``Affine Orbifold Lamination''
  $\Af$
  to any rational map $f:\Ch\to\Ch$. (See Section \ref{sec:preliminaries} for
  an overview of the definitions.) 

 This lamination is particularly useful when it is locally 
  compact; e.g.\ this condition allows the construction of  
  transverse 
  conformal measures and invariant measures
  on the lamination \cite{mishakaim}. Local compactness
  is satisfied in certain important cases, including
  geometrically finite rational maps and Feigenbaum-like quadratic
  polynomials.\footnote{%
  We should note that there is a different construction of laminations
  for rational maps, due to
  Meiyu Su \cite{sulaminations}. These laminations are never locally
  compact.} %
 The question whether the lamination $\Af$ is always locally
 compact was raised in \cite[\S 10, Question 9]{mishayair}.

 \begin{thm}[Failure of local compactness]
   \label{thm:main}
  There exists a quadratic polynomial $f$ whose affine
   orbifold lamination $\Af$ is not locally compact. 
 \end{thm}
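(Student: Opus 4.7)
The plan is to construct a quadratic polynomial $f(z)=z^2+c$ whose critical orbit has very specific recurrence properties that force the existence of pathological inverse orbits in the Julia set. Since local compactness of $\Af$ concerns the topology of backward orbits $\hat{z}=(z_0,z_1,z_2,\dots)$ with $f(z_{n+1})=z_n$ modulo the orbifold structure on the postcritical set, failure of local compactness must be witnessed by a sequence of such orbits with no convergent subsequence despite their $z_0$-projections clustering. The connection to the abstract's remark about hyperbolic leaves suggests that the counterexample and the wildness are two sides of one construction: the inverse branches along which the derivative product $\prod_{n\ge 0}|f'(z_n)|$ diverges will be precisely the source of non-compactness.

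First I would analyze what it means concretely for $\Af$ to fail local compactness at a point $\hat{z}$. Using the tower description of $\Af$ as an inverse limit equipped with affine charts coming from K\"onigs linearizations at repelling periodic orbits, non-local-compactness should reduce to finding a point $\hat{z}$ in the regular leaf space and a sequence $\hat{z}^{(k)}\to \hat{z}$ (in $\Ch^{\mathbb N}$) such that the affine coordinates of $\hat{z}^{(k)}$ in any chart near $\hat{z}$ escape to infinity. I would formulate a concrete criterion: this happens if there are deep inverse returns to a neighborhood of the critical value $c$ along branches for which the linearizing coordinate of the orbit tail blows up.

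Next I would choose a quadratic polynomial $f$ whose critical orbit $(c,f(c),f^2(c),\dots)$ lies in the Julia set and has extremely slow, carefully tuned recurrence. Concretely, I would aim for $f$ with a repelling periodic point $\beta$ such that (i) inverse iterates of a fixed point near $\beta$ accumulate on $c$ along a specified sequence of combinatorial branches, and (ii) the associated K\"onigs coordinates $\phi_\beta(f^{n_k}(c))$ form an unbounded sequence. This is a combinatorial/parameter-selection step; I would realize it by specifying an external angle or itinerary for $c$ with appropriately sparse return times, using standard tools (itinerary realization for quadratic polynomials, or a limiting argument among Misiurewicz-type parameters with increasingly long preperiods chosen inductively). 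The combinatorial flexibility available in $\M_2$ should be enough to realize the desired recurrence.

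Given such $f$, I would exhibit a hyperbolic leaf in $\Af$ intersecting the Julia set: a backward orbit $\hat{z}\subset J(f)$ along which $|f'|$ grows so that the leafwise universal cover is hyperbolic rather than parabolic, and then construct the non-compactness witness by taking backward orbits $\hat{z}^{(k)}$ that agree with $\hat{z}$ in the first $N_k$ coordinates but then branch off through inverse images close to $c$, so that in any $\beta$-linearizing chart their tails lie at distance $\to\infty$ from $\hat{z}$ while their $z_0$-coordinates converge. The existence of such branching is exactly what the recurrence condition on $c$ was designed to guarantee.

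The main obstacle will be step three: specifying a concrete quadratic polynomial whose critical orbit realizes the precise recurrence pattern needed for both the hyperbolic leaf to exist and the affine charts to record unbounded displacements. Verifying the derivative growth estimate along the chosen backward orbit (to make the leaf genuinely hyperbolic) and simultaneously controlling the geometry of the pull-backs near $c$ (to produce the escaping sequence in the affine chart) requires matching two a priori competing conditions, and this combinatorial/analytic balancing act is where the real work of the proof lives.
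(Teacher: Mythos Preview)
Your plan has the right overall architecture --- produce a hyperbolic leaf through the Julia set, then argue that parabolic leaves accumulating on it witness non-compactness --- but two of the load-bearing steps are either wrong or missing.

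First, your criterion for a leaf to be hyperbolic is not correct. You repeatedly tie hyperbolicity of $L(\hat z)$ to divergence of $\prod_{n\ge 0}|f'(z_{-n})|$ along the backward orbit. But the conformal type of a leaf is a global property of the Riemann surface $L(\hat z)$, not a growth rate along a single orbit: the periodic leaf through a repelling fixed point has exponential derivative growth and is nonetheless parabolic (it is the K\"onigs linearization plane). What makes a leaf hyperbolic is that univalent pullbacks of disks along $\hat z$ fail to extend to a full plane; in practice one shows this by exhibiting a set of directions of positive measure along which analytic continuation from $\hat z$ is obstructed. Your derivative condition does not give that, so the ``hyperbolic leaf'' part of your construction has no mechanism behind it.

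Second, even granting a correct criterion, you have not actually produced the polynomial. You acknowledge that the ``combinatorial/analytic balancing act'' between recurrence of $c$ and derivative control is where the real work lives, and then stop. That is the whole theorem. The paper sidesteps this entirely: it first shows by a Baire-category argument that any quadratic $f$ with $J(f)\subset\P(f)$ has (many) hyperbolic leaves over the basin of infinity, and then passes to a \emph{tuning} $g$ of $f$. Inside the little filled Julia set the straightening carries those hyperbolic leaves to leaves of $g$; the Gross star theorem (almost every radial ray lifts on a parabolic leaf) forces these leaves to be hyperbolic, and they now meet $J(g)$ because the basin-of-infinity for $f$ corresponds to points of $J(g)$ outside the little Julia set. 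No explicit parameter, no itinerary engineering, no derivative estimates.

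For the non-compactness step your instinct is right but the implementation is off. Rather than tracking ``affine coordinates escaping in a K\"onigs chart,'' the paper uses a soft argument: if $\hat z^{k}\in\Afn$ converges in the natural topology to a hyperbolic $\hat\zeta$, any limit in $\Af$ would give a nonconstant holomorphic map from a parabolic orbifold leaf to the hyperbolic surface $L(\hat\zeta)$, which is impossible. Combined with a density statement (iterates of any leaf meeting $J(f)$ visit every univalent box), this shows every standard neighborhood in $\Afl$ contains such a non-convergent sequence. You should replace your K\"onigs-chart picture with this Liouville-type obstruction; it is both simpler and actually correct.
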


  Our proof of Theorem \ref{thm:main} is related to another question
   from \cite{mishayair}. As we review in Section \ref{sec:preliminaries},
   the \emph{regular leaf space} $\Rf$ consists of those backward orbits under
   $f$ along which some disk can be pulled back with a bounded amount of
   branching. The path-connected components (\emph{leaves}) 
   of $\Rf$ have a natural
   Riemann surface structure; in many cases all such leaves are 
   parabolic planes. It was asked in \cite[\S 10, Question 2]{mishayair}
   whether rational maps can have leaves that are hyperbolic but 
   do not arise from Siegel disks or Herman rings. Hubbard 
   (personal communication) was the first to 
   suggest an example with this property 
   (see the remark at the end of Section \ref{sec:hypleaves}). The
   hyperbolic leaves of this example lie over a single Fatou component,
   so the question remained 
   whether hyperbolic leaves can intersect 
   the Julia set. We give a positive answer. 

 \begin{thm}[Hyperbolic leaves intersecting the Julia set]
  \label{thm:quadratichyperbolic}
  There exists a quadratic polynomial
  whose regular leaf space contains a hyperbolic leaf that intersects
  the Julia set.
 \end{thm}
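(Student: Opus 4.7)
The plan is to produce a quadratic polynomial $f$ and a backward orbit $\hat{z} = (z_0, z_{-1}, z_{-2}, \ldots)$ with $z_0 \in J(f)$ that lies in $\Rf$ and whose leaf $L_{\hat{z}}$ is a hyperbolic Riemann surface. As already signalled by the abstract, I expect the polynomial producing Theorem \ref{thm:main} to work for this statement as well: failure of local compactness of $\Af$ forces sequences of leaves whose pullback combinatorics are quantitatively rich at infinitely many scales, and this richness, internalised along a single backward orbit through $J(f)$, should be exactly what makes a leaf hyperbolic rather than parabolic.

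First I would construct $f$ with highly recurrent critical dynamics and engineer $\hat{z}$ inside $J(f)$ so that (i) some disk $D \ni z_0$ has pullbacks $D_{-n} \ni z_{-n}$ of uniformly bounded degree $d$, placing $\hat{z}$ in $\Rf$; and (ii) infinitely many of the branched covers $f : D_{-n-1} \to D_{-n}$ are genuinely ramified, because critical values of iterates $f^k$ fall inside the disks $D_{-n}$ at a prescribed sparse sequence of levels. A natural way to achieve this is as a limit of an inductive combinatorial construction in which each new stage introduces ramification at a deeper scale without ever exceeding the total-degree bound $d$. The point $z_0$ will be chosen in $J(f)$ from the start, so the intersection of the resulting leaf with the Julia set is automatic.

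To conclude that $L_{\hat{z}}$ is hyperbolic, I would invoke the classical parabolic--hyperbolic dichotomy for Riemann surfaces and exhibit a non-constant bounded holomorphic function on $L_{\hat{z}}$, equivalently show that the hyperbolic metrics on the $D_{-n}$ do not collapse to zero on compact subsets of the leaf. A concrete candidate is to pull back a suitable bounded test function on $\Ch$ (for instance a truncated B\"ottcher coordinate near $\infty$) through the tower of branched coverings and argue that the ramification at infinitely many scales prevents the limit from being constant. The main obstacle is precisely this analytic step: bounded branching \emph{alone} does not force hyperbolicity, as witnessed by geometrically finite maps where all leaves of $\Rf$ are parabolic. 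So the polynomial and the orbit $\hat{z}$ must be tuned so that the ramification along $\hat{z}$ accumulates at enough distinct scales to give a summable extremal-length obstruction (or equivalently, a non-degenerate limit of harmonic measures), while keeping the total pullback degree uniformly bounded so that $\hat{z} \in \Rf$. Balancing these two opposing constraints — branching rich enough to force hyperbolicity, yet bounded in aggregate — is the heart of the argument and is exactly where the combinatorial construction supporting Theorem \ref{thm:main} enters essentially.
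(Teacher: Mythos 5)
Your proposal and the paper's proof diverge fundamentally, and the proposal has a genuine unresolved gap at exactly the step you flag as ``the heart of the argument.'' You propose to build a quadratic polynomial and a backward orbit $\hat z$ inside $J(f)$ directly, with uniformly bounded pullback degree but with ramification recurring at infinitely many scales, and then to prove hyperbolicity of $L(\hat z)$ by producing a nonconstant bounded holomorphic function (or an extremal-length obstruction). You correctly observe that bounded branching alone does not force hyperbolicity, but you never say how the combinatorial construction actually produces the required obstruction, and I do not see how it could without substantial new analysis; this is not a detail to be filled in but the entire content of the theorem. Moreover, the logical architecture you sketch is inverted: you write that the construction ``supporting Theorem \ref{thm:main}'' should be reused here, but in the paper Theorem \ref{thm:main} is \emph{deduced from} the present statement (via Theorem \ref{thm:localcompactness}); there is no independent construction for Theorem \ref{thm:main} to borrow from.

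The paper sidesteps the direct analytic problem entirely. Its route has two decoupled steps. First (Theorem \ref{thm:hyperbolicleaves}), for a rational map with $J(f)\subset \P(f)$, a Baire-category argument over backward orbits of a Fatou point $z_0$ shows that a generic backward orbit has a leaf $L(\hat z)$ that \emph{does not meet the Julia set at all}; hyperbolicity is then free, because parabolic leaves intersect $J(f)$ by Picard's theorem. No bounded holomorphic function is ever constructed --- hyperbolicity is detected by the leaf's failure to reach $J(f)$. Second (Proposition \ref{prop:hyperbolicleavesjulia}), take such an $f$ and any tuning $g$ of it, with quadratic-like restriction $\phi=g^n|_U$ and straightening $h$. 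The hyperbolic leaf of $f$ over the basin of infinity is transported to a leaf of $g$ that \emph{does} meet $J(g)$, and its hyperbolicity is detected by the Gross star theorem (Lemma \ref{lem:gross}): a parabolic leaf would allow lifting of almost every radial ray from $z_0$, but a positive-measure set of rays runs into $K(\phi)$ --- a region the leaf cannot cross, since under $h$ the corresponding leaf of $f$ stops short of $J(f)$. That contradiction forces $L(\hat z)$ to be hyperbolic. This indirection --- obtain hyperbolicity by obstructing ray-lifting rather than by building a bounded harmonic object --- is the key idea your proposal is missing, and it is why the paper never needs to control the fine geometry of the pullbacks $D_{-n}$ at all.
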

 \begin{remark}
   Rivera-Letelier (personal communication) has announced a
    proof of the stronger result that this polynomial
    can be chosen to satisfy the    topological 
    Collet-Eckmann condition.
 \end{remark}

  We then use the following result to deduce Theorem \ref{thm:main}
   from Theorem \ref{thm:quadratichyperbolic}.

 \begin{thm}[Hyperbolic leaves and local compactness] 
   \label{thm:localcompactness}
  Let $f$ be a rational function whose regular
  leaf space contains a hyperbolic leaf that intersects the
  Julia set. Then the affine orbifold lamination
   $\Af$ is not locally compact.
 \end{thm}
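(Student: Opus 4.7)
The plan is to argue by contradiction: assume that $\Af$ is locally compact, and construct a sequence in $\Af$ which has no convergent subsequence in the fibre over a hyperbolic leaf of $\Rf$ that meets $J(f)$. Let $L_{0} \subset \Rf$ be such a leaf, fix $\z_{0} = (z_{0}, z_{-1}, \ldots) \in L_{0}$ with $z_{0} \in J(f)$, and let $a_{0} \in \Af$ lie over $\z_{0}$.

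First I would recall the affine structure on leaves of $\Af$: near $\z_{0}$, charts on $\Rf$ are obtained by pulling back disks about $z_{0}$, and the leafwise affine coordinate on the leaf $L$ of $\Rf$ through a point $\z$ is built by renormalizing these pullbacks using the asymptotic derivatives of $f^{n}$ along the backward orbit. On a parabolic leaf this coordinate extends to a global affine uniformization $\phi : \C \to L$; on a hyperbolic leaf, by contrast, the natural parametrization factors through $\mathbb{D}$, and the standard pulled-back disks about $\z_{0}$ have uniformly bounded leafwise conformal diameter in $L_0$.

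The core step is to approximate $\z_{0}$ transversely by points lying on \emph{parabolic} leaves. Since $z_{0} \in J(f)$, the transverse dynamics at $\z_0$ is expanding, and the locus of hyperbolic leaves is a thin subset of any transversal (each such leaf carries only a bounded amount of conformal area near $\z_{0}$), so there exists a sequence $\z^{(k)} \to \z_{0}$ in $\Rf$ with each $\z^{(k)}$ on a parabolic leaf $L_{k} \cong \C$. On $L_{k}$ I would choose the affine parametrization $\phi_{k} : \C \to L_{k}$ normalized so that the unit disk in $\C$ corresponds to a standard pulled-back neighborhood of $\z^{(k)}$, and let $a_{k} \in \Af$ be the base point of $\phi_{k}$.

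If $\Af$ is locally compact, then a subsequence of the $a_{k}$ converges to some $a_{\infty} \in \Af$ over $\z_{0}$, i.e.\ on $L_{0}$. Continuity of the affine structure forces $\phi_{k}$ to converge locally uniformly to an affine immersion $\phi_{\infty}$ of a disk into $L_{0}$ based at $\z_{0}$. Because the lift of $f^{-1}$ to $\Af$ acts by affine dilation on each leaf (with factor controlled by the asymptotic derivative along the backward orbit), iterating this action transports $\phi_{\infty}$ to affine immersions with arbitrarily large domains, eventually yielding a global affine immersion $\C \to L_{0}$ and contradicting the hyperbolicity of $L_{0}$.

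The main obstacle is this last step: one must verify that local compactness of $\Af$ really does translate into locally uniform convergence of the renormalized affine charts on varying leaves, and then that the dynamical self-similarity promotes the local limit chart into a global affine immersion of $\C$ into $L_{0}$. A secondary difficulty, resolvable by an expansion argument at Julia-set points, is to ensure that parabolic leaves are in fact dense in transversals near $\z_{0}$; should this fail, the sequence $\z^{(k)}$ should instead be built from distinct hyperbolic leaves whose normalized conformal moduli degenerate, producing an analogous contradiction with local compactness.
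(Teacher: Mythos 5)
There is a genuine gap at the pivotal step of your argument, and it is located exactly where you flag uncertainty. You posit a point $a_0\in\Af$ ``lying over'' the hyperbolic backward orbit $\z_0$, and you later invoke local compactness to extract a subsequence of the $a_k$ converging to some $a_\infty\in\Af$ over $\z_0$. But no such point of $\Af$ exists: since $L_0$ is hyperbolic, any $\psih\in\Af$ with $\psi_{-n}(0)=\zeta_{-n}$ for all $n$ would give a nonconstant holomorphic map from the affine orbifold $L^{\lam}(\psih)$ to a hyperbolic Riemann surface, which is impossible (this is precisely Lemma~\ref{lem:hyperbolicaccumulation} in the paper). More fundamentally, local compactness does not, on its own, produce a convergent subsequence of an arbitrary sequence $a_k\in\Af$; it only does so once you have shown the $a_k$ eventually lie in a single compact neighborhood, and that requires anchoring the sequence at an actual point of $\Af$ in the \emph{laminar} topology, not merely having $\z^{(k)}\to\z_0$ in the natural topology. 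Natural convergence to a point not in $\Af$ gives you nothing; you cannot even appeal to Proposition~\ref{prop:laminartopology} because $\z_0\notin\Afl$.

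The fix is exactly the extra machinery the paper introduces. One must first pick a parabolic unbranched $\zetah\in\Afl$, describe its laminar neighborhood base as boxes $\f^n(\univ(\zeta_{-n},V))$ via Proposition~\ref{prop:laminartopology}, and then show that each such box contains both a hyperbolic orbit $\z$ (Proposition~\ref{prop:hyperbolicdensity}, applied to the given hyperbolic leaf) and a sequence of parabolic orbits $\z^k$ converging to $\z$ in the natural topology while remaining inside the box (Proposition~\ref{prop:hyperbolicdensity} again, with $L$ a parabolic leaf). Only then does Lemma~\ref{lem:hyperbolicaccumulation} yield that this sequence has no convergent subsequence in $\Af$, contradicting pre-compactness of the box. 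Your density claim (``parabolic leaves are dense in transversals near $\z_0$'') is essentially the content of Proposition~\ref{prop:hyperbolicdensity} and is not obvious; it requires the branch-exceptional analysis and the shrinking lemma. Your fallback using sequences of hyperbolic leaves does not help either, since hyperbolic leaves do not correspond to points of $\Af$ in the first place.
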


 \medskip

 \noindent\textsc{Acknowledgements. }
  We thank Carlos Cabrera and
   Juan Rivera-Letelier for useful discussions.
   The third author would like to thank the Institute
   for Mathematical Sciences at Stony Brook and the Simons endowment 
   for its continued support and hospitality.

 \section{Preliminaries} \label{sec:preliminaries}

 In this section, we introduce basic notations and
  give an account of the construction
  of $\Af$ that is sufficient to provide a self-contained proof of
  our results. This account will necessarily be kept concise; 
  for more details, we refer the reader to \cite{mishayair}. 

 \subsection*{Basic definitions}
   The complex plane and Riemann sphere are denoted $\C$ and $\Ch$, as usual. 
    A (spherical) disk of radius $\eps$ around $z\in\Ch$ is denoted
    $D_{\eps}(z)$. If $V$ and $U$ are open sets such that
    $\cl{V}$ is a compact subset of $U$, then we say that
    $V$ is \emph{compactly contained} in $U$ and write
    $V\Subset U$. 

  Throughout this article, $f:\Ch\to\Ch$ will be a rational
  endomorphism of the Riemann sphere. As usual, $F(f)$ and $J(f)$
  denote its Fatou and Julia sets. The set of critical points of $f$ is
  denoted $\Crit(f)=(f')^{-1}(0)$, and the postcritical set is
   \[ \P(f) := \cl{\bigcup_{j\geq 1} f^j\bigl( \Crit(f) \bigr)}. \]
 A point
  $z\in\Ch$ is \emph{exceptional} if 
  $\bigcup_n f^{-n}(z)$ is finite;
  there are at most two such points.

 \subsection*{Natural extension and regular leaf space}
  We denote by $\Nf$ the space of backward
  orbits of $f$. The function $f$ induces an invertible map
  $\fh:\Nf\to\Nf$ (whose inverse is the
  shift map); this map is called the \emph{natural extension} of $f$. 
  Whenever $A\subset\Ch$ 
  satisfies $f(A)\supset A$, its
  \emph{invariant lift} $\Ah\subset\Nf$ is the set of all backward
  orbits that remain in $A$. Abusing notation slightly, we will refer to
  the invariant lift of the Julia set $J(f)$ also simply as ``the Julia set''. 

 If $\z =(z_0\mapsfrom z_{-1} \mapsfrom
  z_{-2} \dots)$ 
   is a backward orbit and $U_0$ is a 
   connected open
  neighborhood of $z_0$, then the 
  \emph{pullback} of $U_0$ along $\z$
  is the sequence 
   $U_0\gets U_{-1}\gets \dots$, where $U_{-k}$ is
  the component of $f^{-k}(U_0)$ 
  that contains $z_{-k}$. This pullback
  is called \emph{univalent} if 
  $f:U_{-(k+1)}\to U_{-k}$ is univalent 
  for all $k$, and 
  \emph{regular} if this is true for all 
  sufficiently large $k$. If 
  the pullback is univalent, then we also 
   say that $U_0$ 
  is \emph{univalent} along $\z$. We 
  will 
  denote the set of all backward orbits 
  of
  $z_0$ along which $U_0$ is univalent 
  by $\univ(z_0,U_0)$.

  The point $\z$ is called \emph{unbranched} if it does not pass 
  through any critical points; 
  it is called \emph{regular} if there is 
  some open connected neighborhood of $z_0$ whose pullback along $\z$ 
  is regular. Note that, if $\z$ is unbranched and regular, then there
  exists a neighborhood of $z_0$ that is univalent along $\z$.

 The set of all regular backward orbits is called the \emph{regular 
  leaf space} and denoted 
  $\Rf$. The topology of $\Rf$ as a subset of the infinite product 
  $\Ch^{\N}$ is called the \emph{natural topology} of $\Rf$. 
  If $\z\in\Rf$, then the path-connected component $L(\z)$ of 
  $\Rf$ containing $\z$ is called the \emph{leaf} of $\z$. Every leaf
  can be turned into a Riemann surface by using the 
  projections 
  $\pi_{-k}:U\to\Ch, \zetah\mapsto \zeta_{-k}$ as charts (for sufficiently
  large $k$).

 We shall call a backward 
  orbit $\z$ \emph{parabolic} or \emph{hyperbolic} depending 
  on whether $L(\z)$ is a parabolic or hyperbolic Riemann 
  surface. The set of all parabolic backward orbits is called the 
  \emph{affine leaf space} and denoted by $\Afn$. 
  We will use the following facts about regular points and their leaves;
  compare \cite{mishayair}. 

 \begin{itemize}
  \item If $z_{-n}\notin \P(f)$ for large $n$, then $\z$ is regular.
  \item Invariant lifts of 
   Cremer, attracting
   and parabolic cycles and of
   boundaries of rotation domains are 
   never regular.
  \item Every leaf that is not the 
   invariant lift of a Herman ring is
    a hyperbolic or parabolic plane.
  \item The periodic leaves associated to repelling 
   periodic points are always
    parabolic; in fact, they are the
    Riemann surface of the 
    classical K{\oe}nigs linearization 
    coordinate. 
  \item Similarly, for every repelling petal based at a parabolic
    periodic point, there is an associated parabolic leaf, uniformized by
    the Fatou coordinate. Every backward orbit converging to a parabolic
    orbit belongs to such a leaf.
  \item Any parabolic leaf intersects the Julia set by Picard's theorem. 
%  \item If $\P(f)$ is finite --- in particular, if $f$ is a
%    Chebychev or Latt\`es map --- then all leaves of $f$ are
%    parabolic. 
 \end{itemize}

 \subsection*{The Lamination $\Af$}   
  We will now describe how the affine orbifold lamination $\Af$ is obtained
   from the affine part of the regular leaf space. We note that this
   lamination will not be used until the end of   
   Section \ref{sec:localcompactness}. Even there, the main fact that is 
   utilized is Proposition \ref{prop:laminartopology} below, 
   which can be understood without
   the exact details of the construction of $\Af$. 

  The group of linear transformations $z\mapsto az$, $a\neq 0$, 
  acts on the space
  $\U$ of nonconstant
  meromorphic functions $\psi:\C\to\Ch$ by 
  precomposition. 
  Let $\Ua$ denote the quotient of $\U$ by this action. 
  $f$ acts on $\Ua$ by
  postcomposition, and we can form the inverse limit space 
  $\widehat{\Ua}$ of sequences
  $\psih = (\psi_0\gets \psi_{-1} \gets 
            \psi_{-2}\gets\dots)$ with
  $\psi_i = f\circ \psi_{i-1}$.

 Now if $\z\in\Afn$, then $L(\z)$ is a parabolic plane, so there exists
  a conformal isomorphism  $\phi:\C\to L(\z)$
  with $\phi(0)=\z$. Thus
  $\psi_k := \pi_{-k} \circ \phi$ defines 
  an element of $\widehat{\Ua}$; note that
  $\psi_k$ depends only on $\z$ 
  since $\psi$ is unique up to 
  precomposition with a linear
  transformation. 
  
 The orbifold lamination $\Af$ is now 
  defined as
  the closure in $\widehat{\Ua}$ of all such 
  sequences. In a slight abuse of
  notation, we will denote the sequence $\psi_k$ associated to $\z$
  also by $\z$ and thus not differentiate between $\Afn$ and its copy
  inside $\Af$. As suggested by its name, $\Af$ is again a lamination;
  its leaves are the (parabolic) one-dimensional orbifolds
  \[ L^{\lam}(\psih) := 
          \{ \psih\circ T_a: a\in\C\},  \]
  where $T_a(z)=z+a$ and $\psih\circ T_a$ is the sequence with entries given
   by
    $\psi_{-j}\circ T_a$.

 Note that there are now two topologies defined on $\Afn\subset\Rf$: the
  original (natural) topology and that induced from $\Af$, called
  the ``laminar'' topology; the latter topological space will be
  denoted by $\Afl$. Rather than working directly
  with the above definition of
  $\Af$, we can use a criterion from \cite{mishayair} that describes
  the topology of $\Afl$ simply in terms of the natural extension. 
  If $V$ and $W$ are two simply connected
  domains, let us say that $V$
  is \emph{well inside $W$} if $\mod(W\setminus V)\geq 2$.

 \begin{prop}[Laminar topology {\cite[Proposition 7.5]{mishayair}}] 
    \label{prop:laminartopology}
  A sequence of points $\z^k\in \Afl$ converges to $\zetah\in\Afl$ in the
  laminar topology if and only if
  \begin{enumerate}
   \item $\z^k\to\zetah$ in the natural topology and
   \item for any $N>0$, if $V$ and $W$ are
    simply connected
    neighborhoods of $\zeta_{-N}$ such that 
    $\f^{-N}(\zetah)\in \univ(\zeta_{-N},W)$ 
    and $V$ is well  inside $W$,
    then 
      $\f^{-N}(\z^k)\in\univ(z_{-N},V)$ for large enough $k$. \qedd
     \label{item:pullbacks}
  \end{enumerate}
 \end{prop}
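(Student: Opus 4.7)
I would unwind both sides of the equivalence in terms of the leaf-parametrizations. Fix conformal isomorphisms $\varphi^k : \C \to L(\z^k)$ and $\varphi : \C \to L(\zetah)$ with $\varphi^k(0)=\z^k$ and $\varphi(0)=\zetah$, so that $\psi^k_{-j} = \pi_{-j} \circ \varphi^k$ and $\psi^{\zetah}_{-j} = \pi_{-j} \circ \varphi$ represent the relevant points of $\widehat{\U^a}$. By definition of the inverse-limit topology and of $\U^a$, convergence $\z^k \to \zetah$ in $\Afl$ means that there exist scalars $a_k\in\C^*$ (a single $a_k$ works at every level $j$, since $\psi^k_{-(j-1)}=f\circ\psi^k_{-j}$ forces compatibility of the rescalings) such that the normalized maps $\tilde\psi^k_{-j}(w):=\psi^k_{-j}(a_k w)$ converge locally uniformly on $\C$ to $\psi^{\zetah}_{-j}$ for every $j\geq 0$. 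The goal is to match this with the two dynamical conditions.

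\textbf{Forward direction.} Evaluating locally uniform convergence at $w=0$ immediately yields (1). For (2), note that univalence of the $W$-pullback along $\fh^{-N}(\zetah)$ produces an open topological disk $\Omega\subset\C$ with $0\in\Omega$ such that $\psi^{\zetah}_{-N}$ maps $\Omega$ biholomorphically onto $W$; moreover, the identity $f^m\circ\psi^{\zetah}_{-N-m}=\psi^{\zetah}_{-N}$ forces each $\psi^{\zetah}_{-N-m}|_\Omega$ to be univalent. Pick $\Omega'\Subset\Omega$ with $\psi^{\zetah}_{-N}(\Omega')\supset V$; by Hurwitz applied to $\tilde\psi^k_{-N}\to\psi^{\zetah}_{-N}$, the map $\tilde\psi^k_{-N}$ is univalent on $\Omega'$ for large $k$ and its image contains $V$, so there is a subdisk $\Omega'^k\subset\Omega'$ mapped biholomorphically onto $V$. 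Univalence at every further level $-N-m$ then follows automatically from $f^m\circ\tilde\psi^k_{-N-m}=\tilde\psi^k_{-N}$, and rescaling back to the original coordinates produces the univalent pullback of $V$ along $\fh^{-N}(\z^k)$.

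\textbf{Backward direction and main obstacle.} Conversely, given (1) and (2), fix $N$ and a pair $V\Subset W$ with $\mod(W\setminus V)\geq 2$ and univalent $W$-pullback along $\fh^{-N}(\zetah)$. By (2), for large $k$ there are disks $\Omega^k\ni 0$ mapped univalently by $\psi^k_{-N-m}$ onto the level-$m$ pullback of $V$ along $\fh^{-N}(\z^k)$, for every $m\geq 0$. Choose $a_k$ so that the lifted disk in the $\tilde\psi^k_{-N}$-coordinate has conformal radius $1$; the annulus condition $\mod(W\setminus V)\geq 2$ then supplies Koebe distortion bounds with constants independent of $k$, yielding normality of $\{\tilde\psi^k_{-N}\}$ on a fixed disk and—by compatibility with $f$—at every further level as well. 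Any subsequential limit is nonconstant, meromorphic, sends $0$ to $\zeta_{-N}$ (by (1)), is univalent on the lifted disk, and respects the compatibility with $f$; uniqueness of the leaf parametrization up to precomposition with $z\mapsto az$—exactly the ambiguity quotiented out in $\U^a$—identifies it with $\psi^{\zetah}_{-N}$, so the full sequence converges in $\widehat{\U^a}$, i.e.\ in $\Afl$. The main obstacle is the simultaneous organization of the rescalings: a single scalar $a_k$ must do the job at every level (secured by the $f$-compatibility), and the modulus-$2$ hypothesis must be strong enough both to produce a normal family via Koebe and to prevent any subsequential limit from degenerating to a constant—a smaller annulus modulus would leave room for the univalent branches to shrink to a point in the normalized coordinate, breaking the argument.
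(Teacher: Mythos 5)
The paper does not prove Proposition~\ref{prop:laminartopology}: it is quoted from \cite[Proposition~7.5]{mishayair}, and the only commentary (the remark immediately following) is that the present formulation of condition~(\ref{item:pullbacks}) is formally weaker but admits the same proof. So there is no in-paper proof to compare against; I can only assess your argument on its own terms.

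Your forward direction is sound in outline: the Hurwitz argument on the lift $\Omega$ of $W$ correctly yields univalence of $\tilde\psi^k_{-N}$ on $\Omega'$ and hence univalent $V$-pullbacks along $\fh^{-N}(\z^k)$, and the relation $f^m\circ\psi_{-N-m}=\psi_{-N}$ does push univalence down the levels. (One should still check that the image $\tilde\psi^k_{-N-m}(\Omega'^k)$ is the full pullback component $V_{-m}$ rather than a proper subdomain, but this is standard.)

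The backward direction, however, has a genuine gap at its final step. You fix one level $N$ and one pair $(V,W)$, normalize so the lifted disk $\Omega^k$ has conformal radius~$1$, and extract a subsequential limit $\psih^\infty$. You then assert that ``uniqueness of the leaf parametrization up to precomposition with $z\mapsto az$\dots identifies it with $\psi^{\zetah}_{-N}$.'' This does not follow. What the construction gives is that $\psi^\infty_{-j}(0)=\zeta_{-j}$ for every $j$, which only implies that $\psi^\infty_{-j}=\psi^{\zetah}_{-j}\circ\tilde p$ for some nonconstant entire map $\tilde p:\C\to\C$ with $\tilde p(0)=0$ --- exactly the projection $L^{\lam}(\psih^\infty)\to L(\zetah)$ that the paper itself exploits in Lemma~\ref{lem:hyperbolicaccumulation}. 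Since $L(\zetah)$ is parabolic here, $\tilde p$ could a~priori be any nonconstant entire function (a polynomial, say), not necessarily affine, so $\psih^\infty$ need not coincide with $\psih^{\zetah}$ in $\widehat{\U^a}$. Your argument only establishes univalence of $\tilde p$ on one fixed disk, which is nowhere near rigidity. The missing idea is to run condition~(\ref{item:pullbacks}) for a whole exhausting family of pairs $(V_N,W_N)$ at depths $N\to\infty$, chosen so that the univalent lifts of $W_N$ along $\zetah$ exhaust $L(\zetah)$ in the uniformizing coordinate; one then has to compare the normalizations $a_k$ arising from the different $N$'s --- this is where the modulus-$2$ condition earns its keep, by controlling the ratio of conformal radii at nested levels --- to obtain normality on all of $\C$ and univalence of the limit $\tilde p$ on all of $\C$, whence $\tilde p$ is linear. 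Without this global step the proof does not close; the ``main obstacle'' you mention at the end is, I think, precisely this, and it is not secured merely by the $f$-compatibility of the rescalings.
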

 \begin{remark}
  Condition (\ref{item:pullbacks}) is formally weaker 
    than that given
    in \cite{mishayair}; however, the proof remains the same.
 \end{remark}

 \subsection*{Auxiliary results}
 We will occasionally use the following classical fact, which is
 a weak version of
 the \emph{Shrinking Lemma} (see
  e.g.~\cite[Appendix 2]{mishayair}). 

 \begin{lem}[Univalent shrinking lemma] \label{lem:shrinking}
  Suppose that $U$ is a domain 
  univalent
  along some backward orbit $\z$
  that does not lie in the invariant
  lift of a rotation domain. Let
  $V_0\Subset U$ and denote by 
  $V_0\gets V_{-1}\gets \dots$ the
  pullback of $V_0$ along $\z$. Then
  $\diam V_{-j}\to 0$ (where $\diam$ denotes spherical diameter). \qedd
 \end{lem}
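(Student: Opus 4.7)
My plan is to apply a normal families argument, following the classical proof of the Shrinking Lemma (see \cite[Appendix~2]{mishayair}).

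Let $U_0 := U$ and write $U_0 \gets U_{-1} \gets U_{-2} \gets \dots$ for the univalent pullback of $U$ along $\z$. Let $g_{-k} : U \to U_{-k}$ be the corresponding univalent inverse branch of $f^k$, with $g_{-k}(z_0) = z_{-k}$ and $f^k \circ g_{-k} = \operatorname{id}_U$. First I would show that $\{g_{-k}\}$ is a normal family of holomorphic maps $U \to \Ch$ in the spherical sense. The key observation is that $f^j$ is univalent on $U_{-k}$ for every $j \leq k$, so $U_{-k}$ contains no critical point of $f^j$; equivalently, $g_{-k}$ omits $\bigcup_{j=1}^{k-1} f^{-j}(\Crit(f))$. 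Using the finiteness of the exceptional set of $f$, one extracts three points omitted uniformly for all large $k$, and Montel's theorem then gives normality.

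Next I would analyze the possible subsequential limits. Let $g_{-k_j} \to h$ locally uniformly on $U$. As a limit of univalent maps, $h$ is either constant or univalent. Suppose for contradiction that $h$ is univalent, and set $\Omega := h(U)$. Then $f^{k_j}$ converges locally uniformly on $\Omega$ to $h^{-1}$, so $\Omega \subset F(f)$ lies in a Fatou component on which a subsequence of iterates has a nonconstant limit. By the classification of Fatou components for rational maps, $\Omega$ must lie in a Siegel disk or Herman ring. A short additional argument, using the complete invariance of the cycle of rotation domains together with the fact that the images $U_{-k_j}$ accumulate on $\Omega$, shows that $\z$ lies in the invariant lift of this cycle, contradicting our hypothesis.

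Hence every subsequential limit of $g_{-k}$ is constant. Combined with the precompactness of $\{g_{-k}|_{\cl{V_0}}\}$ and the compactness of $\cl{V_0} \subset U$, this yields $\diam V_{-k} = \diam g_{-k}(V_0) \to 0$, as required. The most delicate step is the dynamical argument ruling out a nonconstant subsequential limit; the normality step and the final compactness argument are classical.
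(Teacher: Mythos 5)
The paper does not actually prove this lemma; it cites it as a weak form of the classical Shrinking Lemma from \cite[Appendix~2]{mishayair}, so your proposal supplies an argument where the paper gives none. Your overall strategy --- normality of the inverse branches $g_{-k}$, followed by ruling out a nonconstant subsequential limit via the classification of Fatou components, and then the standard compactness argument on $V_0 \Subset U$ --- is the expected one and is sound in the main case.

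Two steps need repair. First, the Montel step has a genuine gap. You arrange that $g_{-k}(U)=U_{-k}$ omits $\bigcup_{j<k}f^{-j}(\Crit(f))$ and then claim that finiteness of the exceptional set lets you extract three fixed omitted values. This fails precisely when \emph{every} critical point is exceptional, for then $\bigcup_{j}f^{-j}(\Crit(f))$ is itself contained in the (at most two-point) exceptional set and you never get three points. This happens exactly when $f$ is conjugate to $z\mapsto z^{\pm d}$; the lemma is still true there (the branches $z\mapsto z^{1/d^k}$ converge to a constant on the unit circle), but that degenerate case must be handled directly rather than by Montel on critical preimages. Second, a rotation cycle is forward invariant but not \emph{completely} invariant --- Siegel disks and Herman rings typically have strict preimage components --- so the appeal to ``complete invariance'' is not available, and likewise the conclusion that $\Omega=h(U)$ lies \emph{in} a Siegel disk or Herman ring should read ``in a Fatou component that eventually maps into a rotation cycle.'' The correct finish uses forward invariance together with $k_j\to\infty$: if $z_{-k_j}\in F$ with $f^m(F)$ contained in the cycle $C$, then $z_{-\ell}\in C$ for $0\le\ell\le k_j-m$, and letting $j\to\infty$ gives $z_{-\ell}\in C$ for all $\ell\ge 0$, i.e.\ $\z$ lies in the invariant lift of $C$. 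With these two repairs the proof is complete.
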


 Also, we will be concerned with the existence of unbranched backward
  orbits of a point $z\in\Ch$ under $f$. Let us say that
  $z$ is a \emph{branch exceptional point} if $z$ has at most finitely
  many unbranched backward orbits, and denote the set of
  such points by $E_B$.

 \begin{lem}[Branch exceptional points] \label{lem:branchexceptional}
  $E_B$ contains at most four points. If $z_0\notin E_B$, then
   for every $z\in J(f)$, there is some $w$ arbitrarily close to
   $z$ such that $f^n(w)=z_0$ and $(f^n)'(w)\neq 0$ for some $n$.
 \end{lem}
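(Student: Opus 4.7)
The proof rests on a \emph{backward-invariance} principle: if $z_0\in E_B$ and $w\in f^{-1}(z_0)\setminus\Crit(f)$, then $w\in E_B$. Indeed, the unbranched backward orbits of $z_0$ decompose as a disjoint union, indexed by the non-critical preimages $w\in f^{-1}(z_0)$, of the unbranched backward orbits of $w$ (obtained by discarding the leading coordinate $z_0$). Finiteness of the total count for $z_0$ thus transfers to each summand, so each such $w$ belongs to $E_B$. Iterating, the full non-critical backward grand orbit of every point of $E_B$ is contained in $E_B$.

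For the bound $|E_B|\le 4$ I first show that $E_B$ is contained in the union of $\P(f)$ with the (at most two) exceptional points: if $z_0$ lies in neither, then $z_0$ has infinitely many backward orbits, none of which can pass through $\Crit(f)$ (as $z_0\notin\bigcup_{j\ge 1}f^j(\Crit(f))\subseteq \P(f)$), so all these orbits are unbranched and $z_0\notin E_B$. Combining backward invariance with the classical density of $\bigcup_n f^{-n}(z_0)$ in $J(f)$, together with a Riemann--Hurwitz count ensuring that most of the $d^n$ preimages at each level are non-critical, forces the non-critical backward orbit of any $z_0\in E_B$ to be genuinely finite. A case analysis via the orbifold signature of $f$ then produces the sharp bound $4$, realised for example by certain Latt\`{e}s and Chebyshev-type examples.

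For the density assertion, assume $z_0\notin E_B$; in particular $z_0$ is not an exceptional point. Given $z\in J(f)$ and a small open disk $D\ni z$, the family $\{f^n|_D\}$ is not normal, so Montel's theorem gives $\bigcup_n f^n(D)\supset\Ch\setminus\{\text{at most two points}\}$, and hence $z_0\in f^n(D)$ for some $n$. The critical preimages of $z_0$ at level $n$ form a thin set compared to the $d^n$ total preimages (bounded via $\Crit(f^n)=\bigcup_{k<n}f^{-k}(\Crit(f))$ together with the assumption that $z_0$ has infinitely many unbranched orbits), so for suitably large $n$ one can select a non-critical preimage $w\in D$ with $f^n(w)=z_0$ and $(f^n)'(w)\neq 0$, as required.

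The main technical obstacle I expect is the sharp constant $4$ in the first assertion: finiteness of $E_B$ follows readily from the backward invariance together with the Julia-set density arguments sketched above, but pinning down the precise value requires the delicate orbifold-style classification in the last step of the case analysis.
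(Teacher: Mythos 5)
Your backward-invariance principle --- that non-critical preimages of points in $E_B$ remain in $E_B$ --- is exactly the engine of the paper's argument, and your reduction of $E_B$ to a subset of $\P(f)$ together with the exceptional points is correct. But both of the remaining steps have real problems.

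For the bound $\# E_B\le 4$, you acknowledge that you do not have a complete argument, and the orbifold-signature classification you gesture at is both heavier and unnecessary. The paper obtains the bound by a single Riemann--Hurwitz count applied directly to the set $E_B$. Since every preimage of a point of $E_B$ is either in $E_B$ or a critical point, one has
$\# f^{-1}(E_B) \le \# E_B + \#\bigl(\Crit(f)\cap f^{-1}(E_B)\bigr)$,
while the local-degree count gives
$\# f^{-1}(E_B) = d\cdot\# E_B - \sum_{c}(\deg(c)-1)$,
the sum running over critical points $c\in f^{-1}(E_B)$. Combining these and using that $f$ has $2d-2$ critical points counted with multiplicity yields
$(d-1)\,\# E_B \le \sum_c \deg(c)\le 4d-4$,
hence $\# E_B\le 4$. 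No case analysis is required, and the bound is uniform in $d$ rather than emerging from a signature-by-signature check.

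For the density assertion there is a genuine gap. The Montel step correctly places some preimage of $z_0$ under some $f^n$ inside the disk $D$, but the subsequent counting does not finish the job: knowing that only a small fraction of the roughly $d^n$ preimages of $z_0$ at level $n$ are critical is a global statement and gives no control over which preimages happen to land in $D$. The (finitely many) preimages of $z_0$ that fall in $D$ at a given level could all be critical, and nothing in your argument rules this out. The paper sidesteps this by first using $z_0\notin E_B$ to pick a non-periodic unbranched backward orbit and then a point $z_{-N}$ on it lying off every critical forward orbit, so that \emph{every} iterated preimage of $z_{-N}$ is automatically non-critical; composing with the unbranched path from $z_{-N}$ back up to $z_0$ turns the ordinary density of $\bigcup_m f^{-m}(z_{-N})$ in $J(f)$ into the desired conclusion. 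You need a device of this kind --- one that supplies an entire dense set of non-critical preimages --- rather than a proportional bound on the critical ones.
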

 \begin{proof}
  Every preimage of a point in $E_B$ either belongs to $E_B$ or is
   a critical point of $f$. Setting $d=\deg(f)$, it follows that
     \[ d\cdot(\# E_B) - \sum_{c} (\deg(c)-1)
        = \# f^{-1}(E_B) \leq \# E_B + \#(\Crit(f)\cap f^{-1}(E_B)), \]
   where the sum is taken over $c\in\Crit(f)\cap f^{-1}(E_B)$. 
   Since $f$ has exactly $2d-2$ critical points, counting
   with multiplicities, it follows that
     \[ (d-1) \# E_B \leq \sum_c \deg(c) \leq 4d-4, \]
    and hence $\# E_B\leq 4$. 

   If $z\notin E_B$, then $z$ has a
    non-periodic unbranched
    backward orbit $z\mapsfrom z_{-1} \mapsfrom z_{-2} \dots$.
    If $n$ is large enough, then $z_{-n}$ is not
    on a critical orbit, and any backward orbit of $z_{-n}$ is
    unbranched. Since iterated preimages of $z_{-n}$ are
    dense in the Julia set, the claim follows. 
 \end{proof}
 \begin{remark}
  For an alternative
    proof which applies also to transcendental meromorphic functions
   (using Nevanlinna's theorem on completely branched values),
   see \cite[Lemma 5.2]{linefields}.
 \end{remark}

 Branch exceptional points have a relation to the existence of
  isolated leaves in the affine orbifold lamination $\Af$. More
  precisely, if $p$ is a repelling periodic point in $E_B$, then 
  the periodic leaf $L(\hat{p})$ associated to the periodic
  backward orbit $\hat{p}$ of $p$ 
  is isolated in $\Af$. (This is an immediate consequence of
  Proposition \ref{prop:laminartopology}.)\footnote{%
   In \cite[Proposition 7.6]{mishayair}, it is stated (incorrectly)
   that such isolated
   leaves can only occur for Latt\`es and Chebyshev polynomials.
   Proposition \ref{prop:minimality} below provides
   a corrected version of this assertion.} 
  Proposition \ref{prop:minimality} shows 
   that these are the only examples
   of isolated leaves. In particular, every isolated leaf is periodic. 

  The famous \emph{Latt\`es} and \emph{Chebyshev} examples have branch
   exceptional repelling fixed points. In fact, for polynomials, we can
   give a description of maps with branch-exceptional periodic points.
   Indeed, suppose that $p$ is such a polynomial; then a calculation
   analogous to the proof of the previous lemma shows that $p$ has
   at most two branch-exceptional points apart from the superattracting
   fixed point at $\infty$. Furthermore,
   if $E_B(p)=\{\zeta_1,\zeta_2\}$ with $\zeta_1\neq\zeta_2$,
   then the set of critical points of $p$ coincides exactly with 
   $p^{-1}(E_B(p))\setminus E_B(p)$. It is well-known 
   (see e.g.\ \cite[Proposition 9.2]{douadyhubbardthurston})
   that this implies $p=T$ or $p=-T$, where $T$ is a Chebyshev polynomial. 
 
  On the other hand, if $E_B(p)$ consists of a single fixed point $z_0$, then
   every preimage of $z_0$ is a critical point, and hence $p$ is conjugate
   to a polynomial of the form
     \[ p(z) = z^{n}\cdot (z-a_1)^{k_1}\cdot\dots\cdot (z-a_m)^{k_m}, \]
   where $n\geq 1$, $a_j\in\C\setminus\{0\}$ and $k_j\geq 2$. 

  For rational maps, there are many more possible combinatorics for
   branch-exceptional periodic points. Even among Latt\`es maps, one can
   find functions with periodic points in $E_B$ which have
   periods $2$, $3$ and $4$, with a number of different combinatorial
   configurations. (Compare \cite{jacklattes}.) 

   Other examples are given e.g.\ by the family
    \[ f_{c}(z) := z\cdot \frac{z-1}{z^2-z-\frac{1}{\lambda}}, \quad
        \lambda\in\C\setminus\{0\}. \]
   Indeed,
    $\infty$ is a critical point of $f_c$, with
      \[ \infty \mapsto 1 \mapsto 0 \mapsto 0. \]
    Since $f_c$ is a quadratic rational map, it follows that 
    every non-periodic backward orbit of $0$ passes through the
    critical point $\infty$, and hence $0\in E_B$. Note that
    the fixed point $0$ has multiplier $\lambda$; in particular 
    $0$ may be attracting, parabolic or irrationally indifferent.

 \subsection*{Quadratic-like maps and renormalization}
  We
   quickly review the concepts regarding renormalization of
  quadratic polynomials relevant for Section \ref{sec:hypleaves}; 
  compare e.g.\ \cite{polylikemaps} for details.
  A \emph{quadratic-like map} is a proper map
  $\phi:U\to V$ of degree $2$, where $U$ and $V$ are Jordan domains
  with $U\Subset V$. The \emph{filled Julia set} of $\phi$ is
  \[ K(\phi) := \{z\in U: \phi^n(z)\in U \ \text{for all $n$}\}. \]

 By Douady and Hubbard's \emph{Straightening Theorem}, for every 
  quadratic-like map $\phi$ there exists a quadratic polynomial
  $f$ (the \emph{straightening} of $\phi$)
  that, restricted to a neighborhood of its filled
  Julia set, is (quasiconformally) conjugate to $\phi$. 

 A quadratic polynomial $g$ is called \emph{renormalizable} if there
  exists $n\geq 2$ and $U\subset\C$ such that
  $\phi := g^n|_U$ is a quadratic-like map with $K(\phi)$ connected.
  If $f$ is the straightening of $\phi$, then $g$ is also called a
  \emph{tuning} of $f$.

 It is well-known that every quadratic polynomial $f$ has
  (infinitely many) tunings; compare e.g.\ 
  \cite[Section 3]{jackpuzzle} for
  a precise statement.

\section{Existence of Hyperbolic Leaves} \label{sec:hypleaves}
  \newcommand{\B}{\mathcal{B}}

 Our proof of Theorem \ref{thm:quadratichyperbolic} begins
  with a result that establishes the existence of rational
  functions with many hyperbolic leaves that do \emph{not} intersect
  the Julia set. Recall that a subset of a Baire space is called
  \emph{generic} if it is the countable intersection of open dense sets. 

 \begin{thm}[Maps with large postcritical set] \label{thm:hyperbolicleaves}
  Let $f$ be a rational map and 
  suppose that $J(f)\subset \P(f)$. Let
  $z_0\in F(f)$ be a non-exceptional 
  point (i.e., a point that has infinitely many backward orbits).

  Then, for a generic
  backward orbit $\z$ of $z_0$, the 
  leaf $L(\z)$ does not intersect
  the Julia set. (In particular, $\z$ is 
  hyperbolic.)
 \end{thm}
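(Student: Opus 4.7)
The plan is to apply Baire category in the compact space of backward orbits of $z_0$ (a closed subspace of $\Ch^{\N}$ under the product topology). Let $F_0$ denote the Fatou component of $z_0$. Because the projection $\pi_0\colon L(\z)\to\Ch$ is a non-constant holomorphic map, $\pi_0(L(\z))$ is open and connected, and a straightforward connectedness argument shows that $L(\z)\cap J(f)\neq\emptyset$ if and only if $\pi_0(L(\z))$ meets $\partial F_0\subset J(f)$. Fixing a countable dense subset $\{w_n\}$ of $\partial F_0$ and, for each $n$, a countable basis $\{V_{n,m}\}$ of simply connected open neighborhoods of $\{z_0,w_n\}$, one checks that the set of backward orbits with $L(\z)\cap J(f)\neq\emptyset$ equals $\bigcup_{n,m,N}C_{n,m,N}$, where
\[
  C_{V,N}\;:=\;\{\z:V\text{ pulls back univalently along }\z\text{ from level }N\text{ onwards}\}
\]
is closed in the product topology. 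It therefore suffices to prove that each $C_{V,N}$ is nowhere dense.

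To that end, fix $V=V_{n,m}$ and $N$, and take any cylinder $[z_0,z_{-1},\dots,z_{-M}]$. The task is to construct $\z'$ in this cylinder along which the pullback of $V$ acquires a critical point at some level $\ge N+1$. Let $V'$ be the component of $f^{-M}(V)$ containing $z_{-M}$; since $V\cap J(f)\neq\emptyset$ and every component of $f^{-M}(V)$ maps onto $V$ under the proper map $f^M$, one has $V'\cap J(f)\neq\emptyset$. Applying the hypothesis $J(f)\subset\P(f)$ inside $V'$ and then using pigeonhole on the finite set $\Crit(f)$ yields a single critical point $c$ with $f^{n_\ell}(c)\in V'$ along some sequence $n_\ell\to\infty$. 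Setting $j_\ell:=M+n_\ell$ gives $f^{j_\ell}(c)\in V$, so the component $W_\ell$ of $f^{-j_\ell}(V)$ containing $c$ is defined. A properness and open-and-closed argument, exploiting that $f^{j_\ell}\colon W_\ell\to V$ is proper, shows that $f^{j_\ell-M}(W_\ell)$ is a full connected component of $f^{-M}(V)$; containing $f^{n_\ell}(c)\in V'$, it must equal $V'$. Hence some $y\in W_\ell$ satisfies $f^{j_\ell-M}(y)=z_{-M}$; taking $\ell$ so large that $j_\ell\ge N+1$ and extending the cylinder by $z'_{-k}:=f^{j_\ell-k}(y)$ for $M\le k\le j_\ell$, with an arbitrary backward orbit of $y$ beyond, produces a backward orbit $\z'$ in the cylinder with $z'_{-j_\ell}=y\in W_\ell$. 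Since $c\in W_\ell$ as well, the pullback of $V$ along $\z'$ is non-univalent at level $j_\ell-1\ge N$, and so $\z'\notin C_{V,N}$.

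The main obstacle is this level-matching step. One must arrange both that the forward orbit of $c$ enters the \emph{specific} component $V'$ of $f^{-M}(V)$ containing $z_{-M}$ (not merely the set $V$), and that $f^{j_\ell-M}$ maps $W_\ell$ surjectively onto this same $V'$. The first is precisely what forces one to apply $J(f)\subset\P(f)$ inside $V'$ rather than directly inside $V$; the second is a short but slightly delicate open-and-closed argument for branched coverings of rational maps, relying on the properness of $f^{j_\ell}\colon W_\ell\to V$. With these ingredients the nowhere-denseness of each $C_{V,N}$ follows, so the union $\bigcup_{n,m,N}C_{n,m,N}$ is meager, and a generic backward orbit $\z$ of $z_0$ has $L(\z)\cap J(f)=\emptyset$.
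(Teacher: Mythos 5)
Your Baire category framing is in the same spirit as the paper's, and the nowhere-density argument for $C_{V,N}$ --- using $J(f)\subset\P(f)$ and the perfectness of $J(f)$ to find a critical point $c$ whose forward orbit re-enters the correct component $V'$ of $f^{-M}(V)$, then matching levels so that $c$ lands in the pullback at depth $\geq N+1$ --- is correct and pleasant. However, the whole argument hinges on the identity
\[
  \{\,\z : L(\z)\cap J(f)\neq\emptyset\,\}\;=\;\bigcup_{n,m,N}C_{V_{n,m},N},
\]
and the inclusion ``$\subset$'' is asserted with ``one checks'' but is, I believe, the genuine difficulty. Given $\zetah'\in L(\z)$ with $\zeta'_0=w_n$ and a path $\gammah$ from $\z$ to $\zetah'$, compactness does give an $N$ and a simply connected neighborhood $\hat V$ of $\gammah([0,1])$ in $L(\z)$ on which $\pi_{-N}$ is injective. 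But membership in $C_{V,N}$ concerns the pullback of $V$ \emph{along $\z$}, i.e.\ the full component $V_{-N}$ of $f^{-N}(V)$ containing $z_{-N}$. There is no reason this component should coincide with, or even sit inside, the ``visible'' sheet $\pi_{-N}(\hat V)$: that would require $f^N$ restricted to $\pi_{-N}(\hat V)$ to be proper over $V$, which fails in general. The component $V_{-N}$ can pick up extra sheets and extra critical points at arbitrarily deep levels, so $V$ need not pull back univalently along $\z$ from \emph{any} level. Because $V$ is forced to reach from $z_0$ all the way to $w_n\in J(f)\subset\P(f)$, you cannot simply shrink $V$ to escape this; and I do not see a countable family of domains $V$ for which exhaustiveness of the decomposition could be proved.

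The paper sidesteps exactly this issue by never pulling back large domains. It considers, for each small connected $U$ meeting $\partial D$ and each curve $\gamma$ in $D\setminus\P(f)$ from $z_0$ ending in $U$, the pullback of $U$ (a neighborhood of the \emph{endpoint}, not of $z_0$) along the backward orbit $\gamma(\z)$ obtained by lifting $\gamma$ starting at $\z$. If $L(\z)$ reaches $\partial D$ then some such pullback genuinely is regular, simply because the endpoint of the lifted path is a regular backward orbit; this is a local statement near $\partial D$ and carries no properness obstruction. Countability then comes from the observation that $\gamma(\z)$ depends only on the homotopy class of $\gamma$ in $D\setminus\P(f)$, which has countable fundamental group, together with a countable base $\{U_j\}$ for $\partial D$. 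Your proposal lacks an analogous provably exhaustive countable parametrization. A secondary, fixable omission: you implicitly treat every backward orbit of $z_0$ as regular (so that $L(\z)$ is defined for all $\z$ in your Baire space). This holds automatically when $z_0\notin\P(f)$ and $D$ is not a rotation domain, but the theorem allows otherwise; the paper first reduces to that case by pushing forward under $\fh$, and your argument needs a similar reduction.
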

 \begin{proof} Let $D$ be the Fatou component containing $z_0$. We first
  prove
  the theorem under the assumption that $D$ is not a rotation domain
  and that $z_0\notin\P(f)$. Below, we
  indicate how this implies the general case.

 \smallskip

  Denote the space of all backward orbits of $z_0$ by $\Z\subset\Nf$;
  note that by assumption these are all unbranched and regular. 
  Note also that
  $\P(f)\cap D$ is countable and has at most one accumulation point in
  $D$,
  which is then necessarily an attracting periodic point.

 Suppose that
   $\gamma:[0,1]\to D\setminus \P(f)$ is a curve with $\gamma(0)=z_0$.
  If $\z\in\Z$, let us denote by $\gamma(\z)$
  the endpoint of the corresponding
  lift of $\gamma$ to 
  $\Nf$. Note that the holonomy
    \[ \z \mapsto \gamma(\z) \]
  is a homeomorphism between the space of backward orbits of $z_0$ and 
  the space of backward orbits of $\gamma(1)$.

  Let $U\subset\Ch$ be any connected open set
   with $U\cap \partial D\neq\emptyset$ and let $\gamma$ 
   be a curve as above that satisfies $\gamma(1)\in U$. 
   Consider the set 
   $A_{U,\gamma,n}$ of all backward orbits $\z\in \Z$ for which the pullback
   of $U$ along $\gamma(\z)$ passes through a critical point at
   least $n$ times. Clearly the set $A_{U,\gamma,n}$ is open; by
   the density of $\P(f)$ in the Julia set, 
   it is also dense.

  It follows that the set 
    $A_{U,\gamma} := \bigcap_{n} A_{U,\gamma,n}$,
  which consists of all $\z\in \Z$ for which the pullback of $U$ along
    $\gamma(\z)$ is not regular, is generic.

  Now note that the pullback $\gamma(\z)$ depends only on the homotopy
   class of $\gamma$ in $D\setminus \P(f)$. Since the fundamental
   group of $D\setminus \P(f)$ is countable, and since $D\cap U$ has
   only countably many components, the set
    $A_U := \bigcap_{\gamma} A_{U,\gamma}$
   is also generic. 

  Finally, let $U_j$ be a countable collection of open sets
  such that $\{U_j\cap \partial D\}$ is a base for the topology of
  $\partial D$. Then
   \[ \A := \bigcap_{j} A_{U_j} \]
  is generic. We claim that $\pi_0(L(\z))\subset D$ for all
  $\z\in \A$ (that is, $L(\z)$ does not intersect the Julia set).

  Indeed,
  otherwise there would be a curve $\gammah:[0,1]\to L(\z)$ such that 
  $\gammah(0)=\z$; $\pi_0(\gammah(t))\in D$ for $t\neq 1$ and
  $\pi_0(\gammah(1))\in \partial D$. 
  Let $\gamma := \pi_0\circ\gammah$; the curve
  $\gammah$ can be easily chosen so that $\gamma\bigl([0,1)\bigr)\cap
  \P(f)=\emptyset$. Since $\gammah(1)$ is regular, there exists some
  small neighborhood $U$ of $\gamma(1)$ whose pullback along
  $\gamma(\z)$ is regular. This contradicts the construction of $\A$.

 \smallskip

  To conclude, consider the case where $z_0$ lies in
  a rotation domain or in the postcritical set. Suppose that 
  $z_{-n}\in f^{-n}(z_0)$ is a preimage that does
  not lie in a rotation domain or in the postcritical set. Then, for a
  generic point in the set
  $\Z(z_{-n})$ of all backward orbits of
  $z_{-n}$, the corresponding leaf
  does not intersect the Julia set. Therefore the same
  is true of a generic point in $\fh^n(\Z(z_{-n}))$. There is at most
  one backward orbit $\z^0$ of $z_0$ that belongs to the invariant lift
  of a rotation domain or of the 
  postsingular set (recall that $z_0\in F(f)$). Furthermore,
  $\Z\setminus\{\z^0\}$ can be written as the disjoint union
  of (countably many)
  sets of the form $\fh^n(\Z(z_{-n}))$. 
  The claim follows. \end{proof}

 The hyperbolic leaves 
  produced by the preceding theorem
  do not intersect the Julia set.
  It seems plausible that under the
  same hypotheses,
  there also exist some hyperbolic leaves
  that do intersect the Julia set.

 Instead, we will use Theorem \ref{thm:hyperbolicleaves} and the
  notion of tuning to prove Theorem \ref{thm:quadratichyperbolic}.
  (This idea is due to
  Rivera-Letelier.)

 \begin{prop}[Hyperbolic leaves over 
         the Julia set] 
                                    \label{prop:hyperbolicleavesjulia}
  Let $f$ be a quadratic polynomial whose regular leaf space contains
  a hyperbolic leaf over the basin of infinity. Then any tuning $g$
  of $f$ has
  a hyperbolic leaf that intersects the 
  Julia set. 
 \end{prop}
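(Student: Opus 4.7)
The plan is to use the straightening of the tuning to transport the hyperbolic leaf of $f$ to a candidate backward orbit of $g$, and then to exploit the richer dynamics of $g$---in particular, the decorations of $J(g)$ attached to the little Julia set---in order to force the transported leaf to meet $J(g)$.

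Let $\phi = g^n|_U : U \to V$ be the quadratic-like restriction realizing the tuning, and let $h : V \to V'$ be the quasiconformal straightening conjugating $\phi$ to $f$. The given hyperbolic backward orbit $\z$ of $f$ has entries $z_{-k}$ accumulating on $J(f) \subset V'$ by the Shrinking Lemma, so after passing to a shift $\f^{-N}(\z)$ for large $N$ (which represents the same leaf up to the canonical biholomorphism induced by $\f$) we may assume all $z_{-k} \in V'$. Set $w_{-k} := h^{-1}(z_{-k}) \in V$ to obtain a backward orbit $\hat w_\phi$ of $\phi = g^n$, and extend it to a backward orbit $\hat w = (x_m)_{m \geq 0}$ of $g$ by inserting the unique intermediate entries $x_{kn+j} := g^{n-j}(w_{-(k+1)})$ for $1 \leq j < n$. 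This $\hat w$ is regular in $\R_g$, since any univalent pullback under $g^n$ factors into univalent pullbacks under $g$.

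For the hyperbolicity of $L_g(\hat w)$: the projection $\R_g \to \R_{g^n}$ sending $(x_m)$ to $(x_{kn})$ is a homeomorphism (its inverse reconstructs intermediate entries by forward iteration) that respects the complex structures on leaves, so $L_g(\hat w) \cong L_{g^n}(\hat w_\phi)$ as Riemann surfaces. Under $h$, the sub-leaf $L_\phi \subset L_{g^n}(\hat w_\phi)$ consisting of $\phi$-backward orbits staying in $U$ is identified quasiconformally with a subdomain of the hyperbolic leaf $L_f(\z)$, so $L_\phi$ is hyperbolic. One then shows that $L_\phi$ fills out all of $L_{g^n}(\hat w_\phi)$ under the identification: the containments $U \Subset V$ and $\phi^{-1}(V) \subset U$ prevent a path in $\R_{g^n}$ starting at $\hat w_\phi$ from escaping $\R_\phi$ while remaining regular.

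For the intersection with $J(g)$: the entries $w_{-k}$ of $\hat w_\phi$ accumulate on $J(\phi) \subset J(g)$, and the tuning structure forces $J(g) \cap V$ to strictly contain $J(\phi)$, with decoration points of $J(g) \setminus J(\phi)$ in $V \setminus K(\phi)$ arbitrarily close to $J(\phi)$. Since $\pi_0(L_g(\hat w)) \subset V \setminus K(\phi)$ is an open connected set accumulating on $J(\phi)$, one locates some decoration point $p \in J(g)$ inside this projection, yielding a point of $L_g(\hat w)$ whose first entry lies in $J(g)$. The hard part will be this last step: verifying that $\pi_0(L_g(\hat w))$ actually contains a decoration point of $J(g)$ rather than merely accumulating on $J(\phi)$ while avoiding $J(g) \setminus J(\phi)$. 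This requires combining the explicit pullback construction in Theorem~\ref{thm:hyperbolicleaves} (to guarantee the leaf's projection is sufficiently large near $J(\phi)$) with a precise description of the decoration structure of tunings near the little Julia set.
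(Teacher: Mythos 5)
Your approach is genuinely different from the paper's, but both of your two main steps have real gaps.

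\textbf{Hyperbolicity.} Your claim that $L_\phi$ ``fills out all of $L_{g^n}(\hat w_\phi)$'' is not correct, and the containments $U\Subset V$, $\phi^{-1}(V)\subset U$ do not prevent escape. What those containments do show is this: if $\hat y^t$ is a path in $\R_{g^n}$ starting at $\hat w_\phi$ whose \emph{zeroth} entry $y^t_0$ stays in $U$, then all entries stay in $U$ (if $y^t_{-j_0}$ were the first entry to reach $\partial U$ with $j_0\geq 1$, then $y^t_{-(j_0-1)}=g^n(y^t_{-j_0})\in\partial V$, contradicting $y^t_{-(j_0-1)}\in \cl{U}\Subset V$). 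But nothing forces $y^t_0$ to stay in $U$: since $\partial U$ meets the Fatou set, the zeroth coordinate can cross $\partial U$ while the orbit remains regular, and the leaf $L_{g^n}(\hat w_\phi)$ then continues well outside $U$ where the straightening gives no control. So $L_\phi$ is merely a hyperbolic \emph{subdomain} of $L_{g^n}(\hat w_\phi)$, which says nothing about the conformal type of the full leaf (the plane contains hyperbolic subdomains). The paper sidesteps this by not attempting to show the leaf stays over $U$ at all: it chooses $z_0\in U\setminus K(\phi)$ so that a set $T$ of directions of positive measure has the ray $R_\theta$ entering $K(\phi)$ while still inside $U$. By exactly the confinement argument above, a lift of $R_\theta|_{[0,r_0]}$ to $L_g(\z)$ would land on a backward orbit over $K(\phi)$ staying in $U$, which under the straightening would put $L_f(\hat z)$ over $K(f)$ --- contradicting the hypothesis. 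Thus a positive-measure set of rays fails to lift, and the Gross star lemma (Lemma~\ref{lem:gross}) immediately yields hyperbolicity, without any need to describe the leaf outside $U$.

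\textbf{Intersection with the Julia set.} Here you are chasing the wrong statement and correctly flag it as the ``hard part.'' You try to show that $\pi_0(L_g(\hat w))$ actually \emph{contains} a decoration point, which requires detailed local geometry of $J(g)$ near $J(\phi)$ that you do not have. The paper's argument is much softer and does not refer to decorations at all: since $\P(g)\subset\bigcup_{j=0}^{n-1} g^j(K(\phi))$, every backward orbit with some entry outside this compact set is regular. Hence if a leaf projects into the basin of infinity and fails to hit $J(g)$, its projection could be freely extended through any boundary point not in $\bigcup g^j(K(\phi))$, forcing $\pi_0(L)=A_\infty$ and $\partial\pi_0(L)=J(g)\subset\bigcup g^j(J(\phi))$ --- impossible since for a tuning with $n\geq 2$ this union is a proper subset of $J(g)$. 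This disposes of the step uniformly, for \emph{any} leaf over the basin.

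In short: the transport-via-straightening idea is natural but gives you only a hyperbolic piece of the leaf, not the leaf itself, and the decoration argument is a detour. The Gross star lemma is the missing tool that converts ``a positive-measure set of rays cannot lift'' into hyperbolicity without controlling the whole leaf.
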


 We will use the following fact.  

 \begin{lem}[Almost every radial line lifts] \label{lem:gross}
  Let $f$ be a polynomial and let $z_0\in\C$. If $\z$ is an unbranched
  backward
  orbit of $z_0$ that belongs to a parabolic leaf, then for
  almost every $\theta\in \mathbb{R}/\mathbb{Z}$, the line
    \[ R_{\theta} := \{z_0+re^{2\pi i \theta}: r \geq 0 \} \]
  lifts to a curve in $L(\z)$ starting at $\z$.
 \end{lem}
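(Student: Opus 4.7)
The plan is to reduce the claim, via uniformization of the parabolic leaf, to Gross's classical star theorem. First I fix a conformal isomorphism $\phi:\C\to L(\z)$ with $\phi(0)=\z$, which exists because parabolic leaves are by assumption isomorphic to $\C$. Set $F := \pi_0\circ \phi:\C\to\Ch$; this is a meromorphic function with $F(0)=z_0$. Since $\z$ is unbranched, $\pi_0$ is a local biholomorphism at $\z$ (in any leaf chart $\pi_{-k}$ near $\z$ one has $\pi_0 = f^k\circ \pi_{-k}$, and $f^k$ is unramified along the backward orbit), so $F'(0)\neq 0$ and $F$ admits a local holomorphic inverse $g$ at $z_0$ with $g(z_0)=0$.

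Next I translate the lifting problem through $\phi$: producing a lift of $R_\theta$ in $L(\z)$ starting at $\z$ is equivalent, via precomposition with $\phi$, to producing a lift of $R_\theta$ in $\C$ starting at $0$ under $F$, which in turn exists if and only if the germ $g=F^{-1}$ at $z_0$ admits analytic continuation along the entire ray $R_\theta$.

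The conclusion is then an immediate consequence of Gross's star theorem (1918): every analytic germ at a point of $\C$ admits analytic continuation along the whole ray in direction $\theta$ for almost every $\theta\in\mathbb{R}/\mathbb{Z}$. Applying this to $g$ and pushing the resulting continuation forward by $\phi$ yields, for almost every $\theta$, the desired curve in $L(\z)$ starting at $\z$ and projecting onto $R_\theta$.

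I do not foresee a substantial obstacle: essentially the only content is to check that the unbranched hypothesis gives $F'(0)\neq 0$, so that a holomorphic germ of $F^{-1}$ at $z_0$ is available to feed into Gross's theorem; the rest is a direct invocation of a classical result.
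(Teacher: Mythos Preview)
Your proof is correct and follows essentially the same route as the paper: uniformize the parabolic leaf by $\phi:\C\to L(\z)$, set $F=\pi_0\circ\phi$ (which is in fact entire here, since $f$ is a polynomial), and apply the Gross star theorem to the local inverse of $F$ at $z_0$. One small caution: the Gross star theorem is not a statement about \emph{arbitrary} analytic germs, but specifically about branches of the inverse of a function meromorphic in the plane---which is exactly your $g=F^{-1}$, so the application is fine; you also make explicit (via the unbranched hypothesis) why $F'(0)\neq 0$, a point the paper leaves implicit.
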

 \begin{proof} Let $\phi:\C\to L(\z)$ be a conformal isomorphism with
  $\phi(0)=\z$. Consider the entire function $\psi := \pi_0\circ
  \phi$. By the Gross star theorem \cite[Page 292]{nevanlinna}, 
  the branch $\alpha$
  of $\psi^{-1}$ that
  carries $z_0$ to $0$ can be continued along almost every radial ray
  $R_{\theta}$. The curve $\phi(\alpha(R_{\theta}))\subset L(\z)$ 
  is then the required
  lift of $R_{\theta}$. \end{proof}

 \begin{proof}[Proof of Proposition \ref{prop:hyperbolicleavesjulia}]
  By assumption, there exists a domain $U$ such that 
   $\phi := g^n:U\to g(U)$ is quadratic-like and conjugate to $f$
   for some $n\geq 2$. Let us denote
   the straightening conjugacy by $h:U\to\C$.

  Note that every leaf that intersects the basin of
   infinity must also intersect the Julia set, since every backward orbit
   that does not belong to the invariant lift of 
    \[ \bigcup_{j=0}^{n-1} g^j(K(\phi)) \quad (\supset \P(f)) \]
   is regular.

  Since $K(\phi)$ is connected,
   we can find some point $z_0\in U\setminus K(\phi)$ such that the set
  \begin{align*}
    T:= \{\theta: \exists r_0>0:\ &z_0+r_0e^{2\pi i\theta}\in K(\phi) 
                                 \text{ and } \\
                                 &z_0+re^{2\pi i \theta}\in U\setminus 
                                 K(\phi)
                                  \text{ for $0\leq r < r_0$} \} 
  \end{align*}
  contains a nondegenerate interval.

  By assumption, $h(z_0)$ has a backward
  orbit whose leaf does not extend to $J(f)$ at all. Let $\z$ be the
  corresponding backward orbit under $g$. Then, for $\theta\in T$, the
  radial ray at angle $\theta$ starting in $z_0$ does not lift to
  $L(\z)$. By Lemma \ref{lem:gross}, this implies that $L(\z)$ is
  hyperbolic. \end{proof}

 \begin{remark}
  As noted in the introduction, the idea for an 
   example of a hyperbolic leaf
   that does not arise from a rotation domain was first suggested
   by Hubbard. 
   He proposed constructing a cubic polynomial with a superattracting
   fixed point at $0$ and a recurrent critical point in the boundary 
   of the basin of attraction of $0$, carefully chosen to make sure that
   some leaf does not extend beyond this basin of attraction. 
 \end{remark}

 \section{Failure of Local Compactness} \label{sec:localcompactness}

  To prove Theorem
   \ref{thm:localcompactness},
  let us begin with the following 
  statement, which
  roughly asserts that, given the presence 
  of a hyperbolic leaf
  intersecting the Julia set, we can find 
  hyperbolic (and parabolic)
  leaves close to any backward orbit.

 \begin{prop}[Hyperbolic leaves near unbranched orbits] 
    \label{prop:hyperbolicdensity}
  Let $f$ be a rational function and let $L$ be a leaf 
  of $\Rf$ that intersects the Julia set. 
  Let $\z\in\Rf$ be any unbranched
  backward orbit of $f$ that does not lie in the 
  invariant lift of
  a rotation
  domain of $f$, and let $V$ be an open simply connected
  neighborhood of $z_0$ that is univalent along $\z$. Assume furthermore
  that $\z$ does not belong to the 
  (isolated) periodic leaf of a branch-exceptional
  repelling
  periodic point.

  Then, for every domain $V_0\Subset V$ with $z_0\in V_0$, and for 
   any neighborhood $N$ of $\z$ in the natural topology, there
   is $m\in\N$ such that 
   $N\cap\univ(z_0,V_0)\cap \f^{m}(L)\neq\emptyset$.
 \end{prop}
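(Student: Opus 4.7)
The plan is to produce a backward orbit $\eta \in L$ and an integer $m$ such that $\fh^m(\eta)$ agrees with $\z$ on its first $M+1$ coordinates \emph{exactly} (for a suitable $M$), thereby placing it in $N$ by the product structure of the natural topology, and such that $V_0$ pulls back univalently along $\fh^m(\eta)$. We obtain $\eta$ by locating an iterated preimage of $z_{-M}$ inside $\pi_0(L)$.

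First I will choose the depth $M$. Since $\z$ is unbranched and regular and not in a rotation-domain lift, the orbit cannot lie forever inside the finite branch-exceptional set $E_B$: otherwise it would be eventually periodic with the periodic cycle in $E_B$, and by the exclusions of attracting, parabolic, Cremer and rotation-domain lifts the cycle would have to be repelling, making $\z$ a point of the isolated periodic leaf of a branch-exceptional repelling periodic point --- the very case excluded in the hypothesis. So there is some $M_0$ with $z_{-M_0} \notin E_B$. I will enlarge $M \geq M_0$ so that any backward orbit agreeing with $\z$ on its first $M+1$ entries lies in $N$, and so that $V_{0,-M}$ is as small as needed (by Lemma~\ref{lem:shrinking} applied to $V_0 \Subset V$, which is univalent along $\z$).

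Next, I locate the preimage. Choose $p \in L$ with $p_0 \in J(f)$. Since $\pi_0(L)$ is open and contains $p_0$, the set $U := \pi_0(L) \cap J(f)$ is a non-empty open subset of $J(f)$. By Lemma~\ref{lem:branchexceptional} applied to $z_{-M}$, there exist $\eta_0 \in U$ and $k \geq 1$ with $f^k(\eta_0) = z_{-M}$ and $(f^k)'(\eta_0) \neq 0$. Pick any $\eta \in L$ with $\pi_0(\eta) = \eta_0$ and set $m := k+M$. A direct computation gives
\[
\fh^m(\eta) = \bigl(z_0,\, z_{-1},\, \dots,\, z_{-M},\, f^{k-1}(\eta_0),\, \dots,\, \eta_0,\, \eta_{-1},\, \dots\bigr),
\]
whose first $M+1$ entries are exactly $z_0, \dots, z_{-M}$, so $\fh^m(\eta) \in N$.

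Finally, to verify $\fh^m(\eta) \in \univ(z_0, V_0)$, the $V_0$-pullback along $\fh^m(\eta)$ splits into three pieces: the first $M$ steps along $\z$ (univalent by the hypothesis on $V$); the middle $k$ steps from $z_{-M}$ back to $\eta_0$ (univalent since $\eta_0$ is unbranched, provided $V_{0,-M}$ lies inside the univalence disk at $z_{-M}$ of the inverse branch); and the infinite tail from $\eta_0$ onward in $L$ (univalent for generically chosen $\eta_0 \in U$, using the Riemann-surface chart structure $\pi_{-\ell}|_L$ for large $\ell$ together with regularity of $\eta \in \Rf$ to ensure the tail is unbranched with a univalent neighborhood). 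The principal obstacle is matching the univalence radius of the middle inverse branch to $\diam V_{0,-M}$: this is resolved by iterating the choice --- enlarging $M$ to shrink $V_{0,-M}$ via the Shrinking Lemma and then re-selecting $\eta_0$ among the (dense) new preimages in $U$ provided by Lemma~\ref{lem:branchexceptional} --- together with the quantitative control offered by the modulus buffer $\mod(V \setminus V_0) > 0$, which is preserved along the univalent $\z$-pullback and converts the small diameter of $V_{0,-M}$ into room inside the pullback disk at $z_{-M}$.
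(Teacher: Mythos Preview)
Your overall strategy---find $\eta_0\in\pi_0(L)$ with $f^k(\eta_0)=z_{-M}$ unbranched, then splice the three pieces together---is the same as the paper's, and your argument that the backward orbit cannot lie entirely in $E_B$ (via eventual periodicity and non-regularity of invariant lifts of non-repelling cycles) is a clean observation. But the proof has a real gap in the ``middle piece'' univalence.

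The problem is your iteration. You acknowledge that the inverse branch $f^{-k}$ at $z_{-M}$ is univalent only on some disk $D_r(z_{-M})$, and you need $V_{0,-M}\subset D_r(z_{-M})$. Your proposed fix is to enlarge $M$ (shrinking $V_{0,-M}$ by the Shrinking Lemma) and re-select $\eta_0$. But when you re-select, the base point $z_{-M}$ has moved and the new inverse branch has a \emph{new} univalence radius $r'$, with no relation whatsoever to the old one or to $\diam V_{0,-M}$. Nothing prevents $r'$ from shrinking faster than $V_{0,-M}$ as $M\to\infty$; the modulus buffer $\mod(V\setminus V_0)$ controls only the relation between $V_{0,-M}$ and $V_{-M}$, not between $V_{0,-M}$ and an unrelated inverse-branch disk. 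So the iteration need not terminate.

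The paper resolves this by working with a \emph{limit point} $a$ of the sequence $(z_{-n})$ rather than with the moving targets $z_{-M}$ themselves. One then fixes, once and for all, a single disk $D_\eps(a)$ and a single univalent branch $f^{-j}:D_\eps(a)\to W$ (where $\hat W\subset L$ is a univalent lift of a simply connected $W$ meeting $J(f)$, which also cleans up your vague ``tail'' argument). Since $z_{-n_k}\to a$ and $\diam V_{-n_k}\to 0$, eventually $V_{-n_k}\subset D_\eps(a)$: a fixed disk swallows the shrinking pullbacks. This requires splitting into two cases according to whether $a\in E_B$ or not; when $a\in E_B$ one shows (using Perez-Marco's theorem to rule out the irrationally indifferent case) that the limit cycle is parabolic and uses the repelling-petal structure to get the required univalent branch into $W$. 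Your exit-from-$E_B$ argument does not substitute for this case analysis: even when the limit set lies in $E_B$, infinitely many $z_{-M}$ lie outside $E_B$, so you are not avoiding Case~1 but merely postponing its difficulty to the unmatched univalence radius.
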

 \begin{remark} The condition that $L$ 
  intersects the Julia set is clearly necessary, since $V$ itself
  may intersect the Julia set.
 \end{remark}
\begin{proof}
  Let $\hat{W}$ be an open subset of $L$ such that
   $W := \pi_0(\hat{W})$ is a simply connected domain 
   intersecting $J(f)$ and such that
   $\hat{W}$ is a univalent pullback of $W$ (i.e., every backward
   orbit in $\hat{W}$ is unbranched). 

  Under the hypotheses of the proposition, let $V_{-n}$ be the
   component of $f^{-n}(V_0)$ containing $z_{-n}$. We will show that
   there are infinitely many $n$ for which there is a univalent
   branch of $f^{-j}$ (for some $j\in\N$) that takes $V_{-n}$
   to a subset
   of $W$. This will complete the proof, as we can then continue
   pulling this subset back along the (univalent) pullback $\hat{W}$
   of $W$. 

 Let $A$ be the  limit set of $z_{-n}$ as $n\to\infty$; note that
   $A\subset J(f)$. We distinguish two cases. 

 \emph{First case:} $A$ is contained in the branch exceptional set
   $E_B$. Since $E_B$ is finite by Lemma \ref{lem:branchexceptional}, 
   $A$ then
   consists of a single periodic orbit; let $p\in A$ be a point of
   this orbit. By assumption, $p$ is not repelling. Clearly $p$ cannot
   be attracting, and by a result of Perez-Marco \cite{perezmarcodulacfatou}, 
   $p$ is not an irrationally indifferent orbit. Hence $p$ must
   be parabolic, and the point $\z$ belongs to the periodic leaf
   associated to some repelling petal based at this orbit. For simplicity,
   let us assume that $p$ is fixed (the periodic case is analogous). 
   Note that $p\notin V$ --- indeed, the only unbranched backward orbit 
   of $p$ is its invariant lift $\hat{p}$, and an invariant lift of
   a parabolic periodic orbit is never regular.

  Since the backward orbit of $p$ is dense in $J(f)$, we can find
   some $w\in W$ such that $f^j(w)=p$ for some $j$. If $\eps>0$
   is sufficiently small, then $w$ has a neighborhood
   $U\subset W$ such that $f^j:U\to D_{\eps}(p)$ has no critical
   points except $w$. For sufficiently large $n$,
   $V_{-n}$ is contained in a repelling petal $P$
   that is itself a simply connected subset of 
   $D_{\eps}(p)$. Hence there is a branch of
   $f^{-j}$ defined on $P$, and hence on $V_{-n}$, that takes values in
   $U\subset W$. This completes the proof in this case.  

  (We recall that the family of rational functions $f_C$
   given in Section \ref{sec:preliminaries} contains
   maps with branch exceptional 
   parabolic points, so this case may indeed occur.)

  \emph{Second case:} There is some $a\in A\setminus E_B$. Then by
   Lemma \ref{lem:branchexceptional}, there is some $j\in\N$ and $w\in W$ 
   such that
   $f^j(w)=a$ and such that $w$ is not a critical point of $f^j$.
   Let $\eps>0$ be sufficiently small such that
   the component $U$ of $f^{-j}(D_{\eps}(a))$ is contained in $W$
   and $f:U\to D_{\eps}(a)$ is a conformal isomorphism.

  If $z_{-n_k}$ is a subsequence of $\z$ with $z_{-n_k}\to a$, then
   by Lemma \ref{lem:shrinking}, $\diam V_{-n_k}\to\infty$ as $n\to\infty$,
   and hence $V_{-{n_k}}\subset D_{\eps}(a)$ for sufficiently large $k$.
   Again, we see that there is a branch of $f^{-j}$ defined on
   $V_{-n_k}$ that takes values in $W$, and are done.
 \end{proof}

 The reason that the presence of hyperbolic backward orbits
  leads to failure of local
  compactness is that parabolic leaves accumulating at such an orbit
  cannot converge, even in the weaker topology of $\Af$:

 \begin{lem}[No convergence to hyperbolic leaves]
   \label{lem:hyperbolicaccumulation}
  Let $\z^n$ be a sequence of points in $\Afl$
   that converges, in the natural topology of $\Rf$,
   to a point $\zetah$ for which
   $L(\zetah)$ is hyperbolic.
  Then $\z^n$ does not converge in $\Af$.
 \end{lem}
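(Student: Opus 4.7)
The plan is to argue by contradiction: if $\z^n$ converges in $\Af$ to some $\xih$, I will extract from the limit a non-constant holomorphic map from $\C$ into the hyperbolic leaf $L(\zetah)$, which contradicts Liouville's theorem.

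More precisely, assuming $\z^n \to \xih$, I choose representatives in $\widehat{\U}$ so that the $k$-th entries $\psi^n_{-k}$ converge locally uniformly on $\C$ to non-constant meromorphic functions $\xi_{-k} : \C \to \Ch$ with $\xi_{-k}(0) = \zeta_{-k}$ and $f \circ \xi_{-k} = \xi_{-(k-1)}$. These assemble into a continuous, non-constant map $\hat{\Psi} : \C \to \Nf$ defined by $\hat{\Psi}(a) := (\xi_{-k}(a))_{k \geq 0}$, with $\hat{\Psi}(0) = \zetah$. In the (local) leaf charts of $\Rf$ provided by $\pi_{-k}$ for large $k$, the map $\hat{\Psi}$ is holomorphic wherever its image lies in $\Rf$, since $\pi_{-k} \circ \hat{\Psi} = \xi_{-k}$ is meromorphic.

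The main technical step is to show that $\hat{\Psi}(\Omega) \subset L(\zetah)$, where $\Omega := \{a \in \C : \xi_0'(a) \neq 0\}$. Fixing $a \in \Omega$, choose a disk $D \subset \Omega$ around $a$ on which $\xi_0$ is univalent; the relation $\xi_0 = f^k \circ \xi_{-k}$ then forces $\xi_{-k}|_D$ to be injective for every $k$. Choosing $D' \Subset D$ so that $\xi_0(\partial D') \cap \xi_0(D') = \emptyset$, an open-and-closed argument inside the connected component of $f^{-k}(\xi_0(D'))$ containing $\xi_{-k}(a)$ shows that this component coincides with $\xi_{-k}(D')$. Hence the pullback of $\xi_0(D')$ along $\hat{\Psi}(a)$ is univalent, and $\hat{\Psi}(a) \in \Rf$. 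Since $\Omega$ is open and path-connected, $\hat{\Psi}$ is continuous in the natural topology, and $\hat{\Psi}(0) = \zetah \in L(\zetah)$, the image $\hat{\Psi}(\Omega)$ is a path-connected subset of $\Rf$ containing $\zetah$, hence contained in the path component $L(\zetah)$.

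Composing with a uniformization $\Phi : L(\zetah) \to \mathbb{D}$ yields a bounded holomorphic map $\Phi \circ \hat{\Psi}|_\Omega : \Omega \to \mathbb{D}$. Because $\C \setminus \Omega$ is discrete, Riemann's removable singularity theorem extends this to a bounded entire function $\C \to \mathbb{D}$, which must be constant by Liouville's theorem. This forces $\xi_0 = \pi_0 \circ \hat{\Psi}$ to be constant on the open set $\Omega$, hence everywhere on $\C$, contradicting the non-constancy of $\xi_0$. The key technical obstacle is the open-and-closed argument identifying the pullback component with $\xi_{-k}(D')$; it relies on the injectivity of $\xi_{-k}|_D$ together with the disjointness $\xi_0(\partial D') \cap \xi_0(D') = \emptyset$ to prevent the component from escaping $\xi_{-k}(D)$.
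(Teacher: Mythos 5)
Your proof is correct and uses the same essential idea as the paper, but unpacks the machinery rather than citing it. The paper's proof is a one-liner: a limit point $\psih \in \Af$ gives rise to the ``projection'' $p\colon L^{\lam}(\psih)\to L(\zetah)$, $\psih\circ T_a\mapsto (\psi_0(a),\psi_{-1}(a),\dots)$, which by the results quoted from \cite[\S 6.1]{mishayair} is a nonconstant holomorphic map from a parabolic orbifold to a hyperbolic surface, hence impossible. Your $\hat\Psi$ is precisely this projection (composed with $\C\to L^{\lam}(\psih)$), and your work consists in verifying directly the two facts that the paper takes off the shelf: that $\hat\Psi$ lands in $L(\zetah)$, and that it is holomorphic there. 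Your device of restricting to $\Omega=\{\xi_0'\neq 0\}$ is a nice simplification: it lets you get univalent (not merely regular) pullbacks of the disks $\xi_0(D')$, so that $\pi_0$ itself is a chart near $\hat\Psi(a)$, and it sidesteps the orbifold structure of $L^{\lam}(\psih)$ entirely, so that the contradiction reduces to Liouville plus removable singularities rather than to the general no-map-from-parabolic-orbifold-to-hyperbolic-surface principle. The open-and-closed argument you sketch for $\xi_{-k}(D')$ being a full component of $f^{-k}(\xi_0(D'))$ does go through (compactness of $\cl{D'}$ plus the disjointness $\xi_0(\partial D')\cap\xi_0(D')=\emptyset$), and the injectivity of $\xi_{-k}|_D$ indeed follows from $\xi_0=f^k\circ\xi_{-k}$. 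Two minor points worth flagging: (i) the phrase ``$\hat\Psi(\Omega)$ is a path-connected subset of $\Rf$ containing $\zetah$'' is literally false if $\xi_0'(0)=0$, since then $0\notin\Omega$; the argument still works because $\hat\Psi$ is continuous up to $0$ and $\hat\Psi(0)=\zetah\in\Rf$ by hypothesis, so $\hat\Psi(\Omega)$ still lies in the path component of $\zetah$. (ii) The extraction of convergent representatives $\psi^n_{-k}\to\xi_{-k}$ in $\widehat{\U}$ from convergence of $\z^n$ in $\widehat{\Ua}$ deserves a word, since the group acts only by a single linear change of variable in the $0$-th slot and the other slots are constrained; but this is exactly the content of the lamination structure set up in \cite{mishayair}, so it is reasonable to take it for granted here.
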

 \begin{proof} Suppose that $\psih$ is a limit point of $\z^n$ in $\Af$. Then
 $\psi_{-n}(0)=\zeta_{-n}$ for all $n$. However, this means that the
 projection 
   \[ p: L^{\lam}(\psih)\to L(\zetah);\ 
         \bigl(\phi_0\gets \phi_{-1}\gets \dots\bigr) \mapsto  
         \bigl(\phi_0(0)\gets \phi_{-1}(0)\gets\dots\bigr) \]
 is a nonconstant 
 holomorphic map from the affine orbifold $L^{\lam}(\psih)$ to
 the hyperbolic surface $L(\zetah)$ (cf.\ \cite[\S
 6.1]{mishayair}). This is impossible. \end{proof}

 After these preliminaries, we are ready to
  prove Theorem \ref{thm:localcompactness}. 

 \begin{lem}[Non-pre-compact boxes] \label{lem:noncompactbox}
  Suppose that $f$ has a hyperbolic leaf $L_h$ that intersects
   the Julia set. 
   Let $\zetah\in \Afn$ be an unbranched backward orbit that does not
   belong to an isolated leaf of $\Af$.

  Let
   $V$ and $W$ be simply connected neighborhoods of $\zeta_0$
   such that
   $V$ is well inside $W$ and $W$ is univalent along $\zetah$. 
   Then $\univ(\zeta_0,V) \cap \Afn$ is not
   pre-compact in $\Af$.
 \end{lem}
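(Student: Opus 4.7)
The plan is to apply Proposition~\ref{prop:hyperbolicdensity} twice. The first application, with $\zetah$ as target and the given hyperbolic leaf $L_h$, will produce a hyperbolic backward orbit $\hat\xi$ of $\zeta_0$ inside an intermediate univalence box. The second application, with $\hat\xi$ as target and a parabolic leaf, will produce a sequence of parabolic orbits inside $\univ(\zeta_0,V)\cap\Afn$ that converge to $\hat\xi$ in the natural topology. Lemma~\ref{lem:hyperbolicaccumulation} will then preclude any $\Af$-convergent subsequence, yielding non-pre-compactness.

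For the first application, choose a simply connected domain $V'$ with $V\Subset V'\Subset W$; this is possible because $V$ is well inside $W$. The target $\zetah$ is unbranched by assumption, its leaf is parabolic (so it is not in the invariant lift of a Siegel disk or Herman ring), and by hypothesis it does not lie on an isolated leaf of $\Af$; in particular, it is not on the K{\oe}nigs plane of a branch-exceptional repelling periodic point. Applying Proposition~\ref{prop:hyperbolicdensity} with outer univalent domain $W$, inner subdomain $V'$, and $L=L_h$, I obtain a backward orbit $\hat\xi\in\univ(\zeta_0,V')\cap\fh^m(L_h)$ for some $m\geq 0$. The leaf $L(\hat\xi)=\fh^m(L_h)$ is hyperbolic and meets the Julia set.

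For the second application, let $L_p$ be the K{\oe}nigs plane of any repelling periodic point $p\in J(f)$; this is a parabolic leaf intersecting $J(f)$. The orbit $\hat\xi$ is unbranched because $V'$ is univalent along it; its leaf is hyperbolic and meets $J(f)$, so it is not the invariant lift of a rotation domain; and $\hat\xi$ does not lie on any K{\oe}nigs plane, so in particular not on an isolated periodic leaf. Hence I may apply Proposition~\ref{prop:hyperbolicdensity} with target $\hat\xi$, outer univalent domain $V'$, inner subdomain $V$, and $L=L_p$. For each $n$, taking $N_n$ in a shrinking basis of natural neighborhoods of $\hat\xi$, this produces $\z^n\in\univ(\zeta_0,V)\cap\fh^{m_n}(L_p)\cap N_n$. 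Each $\z^n$ is parabolic, being on an iterate of the parabolic leaf $L_p$, so $\z^n\in\univ(\zeta_0,V)\cap\Afn$, and by construction $\z^n\to\hat\xi$ in the natural topology.

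Since $L(\hat\xi)$ is hyperbolic, Lemma~\ref{lem:hyperbolicaccumulation} applied to $\{\z^n\}$ (and to any subsequence, which converges to the same limit in the natural topology) shows that no subsequence converges in $\Af$. Therefore $\univ(\zeta_0,V)\cap\Afn$ is not pre-compact in $\Af$. The delicate point in the plan is the bookkeeping of the chain $V\Subset V'\Subset W$: it is essential that the inner subdomain used in the second application of Proposition~\ref{prop:hyperbolicdensity} be exactly $V$ (not a proper subdomain), so that the constructed parabolic orbits lie in the box $\univ(\zeta_0,V)$ required by the statement of the lemma, and this forces the first application to produce $\hat\xi$ in the larger box $\univ(\zeta_0,V')$ rather than in $\univ(\zeta_0,V)$ itself.
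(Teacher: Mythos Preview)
Your proof is correct and follows exactly the same approach as the paper: choose an intermediate domain between $V$ and $W$, apply Proposition~\ref{prop:hyperbolicdensity} once with $L=L_h$ to obtain a hyperbolic orbit $\hat\xi$ in the intermediate box, apply it a second time (with a parabolic leaf) to obtain parabolic orbits in $\univ(\zeta_0,V)$ converging naturally to $\hat\xi$, and conclude via Lemma~\ref{lem:hyperbolicaccumulation}. Your explicit bookkeeping of the chain $V\Subset V'\Subset W$ and of the hypotheses needed for each application of Proposition~\ref{prop:hyperbolicdensity} is in fact tidier than the paper's own terse version.
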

 \begin{proof}
  Choose some $V_0$ with 
   $V \Subset V_0\Subset W$.
   By Proposition 
   \ref{prop:hyperbolicdensity}, there 
   exists a
   backward orbit 
   $\z \in \univ(\zeta_{0},V_0)$ such that the leaf $L(\z)$ is hyperbolic. 
   Now we can apply
   Proposition 
   \ref{prop:hyperbolicdensity} to
   $\z$, this time 
   with $L$ being a parabolic leaf.  Hence 
   there is a
   sequence 
   $\z^k\in\univ(\zeta_{0},V_0)
      \cap\Afn$ 
   that converges to
   $\z$ in the natural topology. By 
   Lemma
   \ref{lem:hyperbolicaccumulation}, this 
   sequence has no convergent
   subsequence in $\Af$. 
 \end{proof}

 \begin{cor}[Failure of local compactness] \label{cor:nonlocalcompactness}
  Let $f$ be a rational function and suppose that $\Rf$ contains 
  some hyperbolic leaf
  $L$ that intersects the Julia set. 
  Then, for all $\zetah\in\Af$ that are not on isolated leaves,  
  $\Af$ is not locally compact at
  $\zetah$.
 \end{cor}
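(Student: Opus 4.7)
The plan is to argue by contradiction: suppose $\zetah \in \Af$ is not on an isolated leaf yet admits a compact neighborhood $K$ in $\Af$. I will produce a sequence in $K$ with no convergent subsequence, using Lemma~\ref{lem:noncompactbox} applied to a suitably modified base point. First, since $\Afn$ is dense in $\Af$ and the collection of isolated leaves is countable and $\fh$-invariant (being the orbits of the finitely many branch-exceptional repelling periodic points), one can pick $\zetah' \in \operatorname{int}(K) \cap \Afn$ not on an isolated leaf. By regularity, $\fh^{-n}(\zetah')$ is unbranched for $n$ sufficiently large; since $\fh$ is a homeomorphism of $\Af$ and the property of lying on an isolated leaf is $\fh$-invariant, replacing $(\zetah, K)$ by $(\fh^{-n}(\zetah'), \fh^{-n}(K))$ reduces to the case in which $\zetah \in \Afn$ is itself unbranched, not on an isolated leaf, and has compact neighborhood $K$ in $\Af$.

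Next, by Proposition~\ref{prop:laminartopology}, $\operatorname{int}(K)$ contains a basic laminar neighborhood of $\zetah$ of the form $U_0 = U_{\mathrm{nat}} \cap \bigcap_{i=1}^{M} \{\z : \fh^{-N_i}(\z) \in \univ(z_{-N_i}, V_i)\}$, where each $V_i$ is well inside some $W_i$ univalent along $\fh^{-N_i}(\zetah)$. Since $\zetah$ is unbranched, I may shrink each $V_i$ (preserving the other conditions) so that the forward images $f^{N_i}(V_i)$ all lie inside a single simply connected domain $V$ placed well inside a simply connected $W$ univalent along $\zetah$. Applying Lemma~\ref{lem:noncompactbox} to this pair $V \Subset W$ produces a sequence $\z^k \in \univ(\zeta_0, V) \cap \Afn$ with no convergent subsequence in $\Af$; inspecting the proof of that lemma, one may further arrange that the $\z^k$ converge in the natural topology to a hyperbolic orbit $\z$ prescribed in advance to lie in $U_{\mathrm{nat}}$.

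It remains to verify that $\z^k \in U_0 \subset K$ for large $k$, which will contradict the compactness of $K$. Natural convergence gives $\z^k \in U_{\mathrm{nat}}$ eventually. For each level $N_i$, the inclusion $f^{N_i}(V_i) \subset V$ forces the pullback component of $V$ along $\zetah$ at level $N_i$ to contain $V_i$, and natural convergence at level $N_i$ places $z^k_{-N_i}$ in this same pullback component for $k$ large; because $V$ is univalent along $\z^k$, restriction of this univalent pullback to the subset $V_i$ shows that $V_i$ is also univalent along $\fh^{-N_i}(\z^k)$. Thus all the higher-level conditions defining $U_0$ are satisfied, so $\z^k \in U_0 \subset K$ eventually, yielding a sequence in the compact set $K$ with no convergent subsequence, the desired contradiction. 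The main technical obstacle is precisely this propagation of the level-$0$ univalence condition (which is all that Lemma~\ref{lem:noncompactbox} supplies directly) to the finitely many higher-level univalence conditions characterizing a basic laminar neighborhood of $\zetah$; the remedy is to enlarge $V$ so that its $\zetah$-pullbacks dominate each given $V_i$, turning the level-$0$ univalence of $V$ along each $\z^k$ into the required higher-level univalence of the $V_i$.
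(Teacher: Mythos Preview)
Your proof is correct and follows essentially the same route as the paper: reduce to an unbranched $\zetah\in\Afn$ not on an isolated leaf, identify a neighborhood base in the laminar topology via Proposition~\ref{prop:laminartopology}, and invoke Lemma~\ref{lem:noncompactbox} to show none of these neighborhoods is pre-compact.

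The paper's organization is cleaner in one respect worth noting. Rather than working with finite intersections $U_{\mathrm{nat}}\cap\bigcap_i\{\z:\fh^{-N_i}(\z)\in\univ(z_{-N_i},V_i)\}$ and then propagating a single level-$0$ univalence condition up to each level $N_i$, the paper observes that the sets $\fh^{n}\bigl(\univ(\zeta_{-n},V)\bigr)$, for a \emph{single} level $n$ and a single $V$ well inside some $W$ univalent along $\fh^{-n}(\zetah)$, already form a neighborhood base of $\zetah$ in $\Afl$. One then applies Lemma~\ref{lem:noncompactbox} directly to $\fh^{-n}(\zetah)$ (which is still unbranched and not on an isolated leaf) to see that $\univ(\zeta_{-n},V)\cap\Afn$ is not pre-compact, and pushes forward by the homeomorphism $\fh^{n}$. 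This avoids having to inspect the proof of Lemma~\ref{lem:noncompactbox}, avoids the bookkeeping of several levels $N_i$, and in particular makes the ``main technical obstacle'' you describe disappear: the propagation of univalence from level $0$ to the levels $N_i$ is replaced by simply choosing $n$ large enough in the first place. Your argument effectively reproves (at level $0$, via forward images) the fact that a single deep level suffices, which is why it works; but recognizing this up front shortens the proof.
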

 \begin{remark}
  This completes the proof of Theorem
   \ref{thm:localcompactness}, and thus of Theorem \ref{thm:main}.
 \end{remark}
 \begin{proof} 
  By definition, $\Afl$ is dense in
   $\Af$. So it is sufficient to restrict
   to the case of
   $\zetah\in\Afl$. Also, unbranched 
   backward orbits are dense in
   $\Afl$, so we can assume that
   $\zetah$ is unbranched.

  By Proposition
  \ref{prop:laminartopology}, the sets 
  $\f^n\bigl(\univ(\zeta_{-n},V)
         \bigr)$, where
  $n\geq 0$ and $V,W$ are Jordan
  neighborhoods of $\zeta_{-n}$ such that 
  $W$ is univalent along $\f^n(\zeta_{-n})$
  and $V$ is well inside $W$, form
  a neighborhood base of
  $\zetah$ in the laminar topology of $\Afl$. 
  By Lemma
  \ref{lem:noncompactbox}, none of these sets
  is pre-compact in $\Af$. So $\Af$ is not
  locally compact at $\zetah$. 
  \end{proof}

 Finally, we remark that Proposition \ref{prop:hyperbolicdensity} also
  proves minimality of the lamination $\Af$ (after removing finitely many
  isolated leaves). 

 \begin{prop}[Minimality] \label{prop:minimality}
  Let $L$ be a leaf of $\Af$, and let $\z\in\Afl$ be a point that
   does not belong to the isolated leaf associated to a branch-exceptional
   repelling periodic point. Then $L$ accumulates on $\z$ in the topology
   of $\Af$. 

  In particular, let $\Af'$ be obtained from
   $\Af$ by removing all (finitely many) leaves associated to 
   branch-exceptional repelling orbits. 
   Then $\Af'$ is minimal; i.e., every leaf of $\Af'$ is
   dense in $\Af'$. 
 \end{prop}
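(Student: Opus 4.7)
The plan is to deduce both assertions directly from Proposition~\ref{prop:hyperbolicdensity}. The ``in particular'' clause follows immediately from the main statement: every leaf of $\Af'$ then has closure containing all of $\Afl \cap \Af'$, hence all of $\Af'$ by density of $\Afl$ in $\Af$. So the work lies in proving the first assertion for any non-isolated leaf $L$ and any admissible $\z$.

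I would first reduce to the case where $\z$ is unbranched and not in the invariant lift of a rotation domain: such points are dense in $\Afl$ outside the finitely many excluded isolated periodic leaves, and $\z \in \overline{L}$ is a closed condition in $\Af$. By Proposition~\ref{prop:laminartopology}, a base of $\Af$-neighborhoods of $\z$ is given by the pullback sets $\f^n\bigl(\univ(\zeta_{-n}, V)\bigr)$ with $V$ well inside some $W$ univalent along $\f^{-n}(\z)$. So it suffices to show each such basic neighborhood meets $L$.

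Next I would pick any $\eta \in L$ whose underlying backward orbit lies on a parabolic leaf $L_\eta$ of $\Rf$; this leaf intersects the Julia set by Picard's theorem. Apply Proposition~\ref{prop:hyperbolicdensity} to $L_\eta$ with target $\f^{-n}(\z)$, using $W$ as the ambient univalent domain and $V$ as the smaller pullback domain. Its conclusion produces, for every natural-topology neighborhood $N$ of $\f^{-n}(\z)$, some $m$ and a point $\mu \in \f^{m}(L_\eta) \cap \univ(\zeta_{-n}, V) \cap N$. Pushing forward by $\f^n$ gives a point of $\f^{n+m}(L)$ in the prescribed $\Af$-neighborhood of $\z$, showing that the $\fh$-orbit of $L$ accumulates on $\z$ in $\Af$.

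The main obstacle is to pass from the $\fh$-orbit $\bigcup_m \f^m(L)$ to the single leaf $L$. In the affine orbifold lamination, leaves are translation orbits $\{\psih \circ T_a\}$, and the action of $\fh$ in the quotient $\widehat{\Ua}$ (modulo linear rescalings) identifies $\fh(\psih)$ with some $\psih \circ T_a$ whenever a Koenigs or Fatou-coordinate linearization of $f$ is available along the orbit, as is the case for periodic leaves. Combining this with the density of periodic leaves in $\Af'$ (from density of repelling cycles in the Julia set), I would transfer the accumulation of the $\fh$-orbit of $L$ to accumulation of $L$ itself. Making this identification precise for non-periodic leaves is the hardest part of the argument and requires a careful limiting procedure exploiting the interplay between translations and rescalings in $\widehat{\Ua}$.
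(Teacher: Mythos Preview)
Your reduction to unbranched $\z$ and the neighborhood base from Proposition~\ref{prop:laminartopology} is fine, and for \emph{invariant} leaves your argument is essentially the paper's: if $L$ is a periodic leaf of $\Rf$ then $\fh^m(L)=L$ for some $m$, so Proposition~\ref{prop:hyperbolicdensity} already places points of $L$ itself in every basic neighborhood of $\z$.

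The genuine gap is the step you flag as ``the main obstacle''. For a general leaf $L$ of $\Af$, the map $\fh$ does \emph{not} send $L$ to itself: $\fh(L^{\lam}(\psih))=L^{\lam}(\fh(\psih))$, and $\fh(\psih)$ lies on the same translation orbit as $\psih$ only when $\psi_0$ semiconjugates an affine map to $f$, which is essentially the periodic case. Your proposed fix---using density of periodic leaves plus a limiting procedure mixing translations and rescalings---does not close this gap. Density of periodic leaves tells you nothing about the closure of a \emph{specific} non-periodic $L$, and there is no mechanism in your sketch that produces a point of $L$ (rather than of some $\fh^m(L)$) in a given $\Af$-neighborhood of $\z$. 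A further issue: for a leaf $L\subset\Af\setminus\Afl$, there is no reason the projected backward orbit of a chosen $\eta\in L$ lies on a \emph{parabolic} leaf of $\Rf$, so your choice of $L_\eta$ may not even be available.

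The paper sidesteps all of this with a different idea: it shows directly, via the Ahlfors Five Islands theorem applied to the entire functions $\psi_{-j}$, that \emph{any} leaf $L=L^{\lam}(\psih)$ of $\Af$ accumulates on the invariant leaf of at least one of five non-branch-exceptional repelling fixed points. Since invariant leaves are dense by the first step, transitivity of closure gives $\overline{L}\supset\Af'$. That Five-Islands argument is the missing ingredient in your proof.
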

 \begin{remark} 
  By Lemma \ref{lem:branchexceptional}, there are at most four 
   branch-exceptional periodic points, so the lamination $\Af$ contains
   at most four isolated leaves. This bound is achieved for some 
   Latt\`es maps. 
 \end{remark}
 \begin{proof}
   (Compare \cite[Proposition 7.6]{mishayair}.) If $L$ is an invariant
    leaf in $\Afl$, then the claim follows immediately from Propositions
    \ref{prop:hyperbolicdensity} and \ref{prop:laminartopology}. 

  Now suppose that $L=L(\psih)$ is an arbitrary leaf of $\Af$; by passing to
   an iterate, we may assume that $f$ has at least five repelling
   fixed points $\alpha_1,\dots,\alpha_5$ that are not branch-exceptional. 
   We show that $L$ accumulates at the invariant leaf
   of at least one of these fixed points, which completes the proof.
   
  To do so, let $D_k$ be pairwise
    disjoint linearizing Jordan neighborhoods of the 
    fixed points $\alpha_k$. By the Ahlfors Five Islands theorem 
    (see \cite{walterahlfors}), for every $j$ there is some $k_j$ such that
    $\psi_{-j}:\C\to\Ch$ has an \emph{island} over $D_{k_j}$. That is,
    there is a domain $V_j$ such that $\psi_{-j}:V_j\to D_{k_j}$ is 
    a conformal isomorphism. 

   There is some $k$ such that $k_j=k$ for infinitely many $j$. 
    Analogously to the proof of
    \cite[Proposition 7.5]{mishayair}, it follows that
    that $L$ accumulates on the invariant leaf $L(\hat{\alpha_k})$, 
    as desired.
 \end{proof}

\nocite{jackdynamicsthird}
\bibliographystyle{hamsplain}
% unsrt to have [1]unsorted, plain to have [1] sorted, alpha to have
% something horrible, abbrv to have the same but shorter
\bibliography{/Latex/Biblio/biblio}

\end{document}